\documentclass[a4paper,12pt]{amsart}
%%%%%%%%%%%%%%%%%%%%%%%%%%%%%%%% Packages %%%%%%%%%%%%%%%%%%%%%%%%%
%\usepackage[T1]{fontenc}
%\usepackage[utf8]{inputenc}
\usepackage{CJK}
\usepackage{romannum}
\usepackage{amsfonts}
\usepackage{mathtools}
\usepackage{amsmath,amscd}

\usepackage{ifthen}
\usepackage{amsrefs}
\usepackage{mathrsfs}
\usepackage{amsthm}
\usepackage{amssymb}

\usepackage{tikz-cd}
  \usepackage{graphicx}
 \usepackage{relsize}

\usepackage{MnSymbol}
\usetikzlibrary{positioning, decorations.text}

\usepackage{hyperref}
\usepackage{url}
\usepackage{etoolbox}
\usepackage{rotating}

\usepackage[shortlabels]{enumitem}
\usepackage[paper=a4paper,left=20mm,right=20mm,top=25mm,bottom=30mm]{geometry}

%\usepackage{titlesec}
%\titleformat
%{\chapter} % command
%[hang] % shape
%{\normalfont\LARGE\bfseries} % format
%{\thechapter \ } % label
%{0.0ex} % sep
%{
%   % \rule{\textwidth}{1pt}
%   % \vspace{1ex}
%    %\centering
%} % before-code
%[
%\vspace{-0.5ex}%
%%\rule{\textwidth}{0.3pt}f
%] % after-code

%\titleformat
%{\section} % command
%[hang] % shape
%{\normalfont\large\bfseries} % format
%{\thesection \quad} % label
%{0pt} % sep
%{} % before-code
%[\vspace{-0.5ex}] % after-code
%
%
%
%\titleformat
%{\subsection} % command
%[hang] % shape
%{\normalfont\normalsize\bfseries} % format
%{\thesubsection \quad} % label
%{0pt} % sep
%{} % before-code
%[\vspace{-0.5ex}] % after-code

%%%%%%%%%% TIKZ %%%%%%%%%%%%%%%%%%%%%%%%%%%%%%%%%%%%%%%%%%%%%%%%%%%
\usepackage{tikz}
%\usetikzlibrary{mindmap}

\usetikzlibrary{backgrounds}
\usepackage{tkz-euclide}
\usepackage{xcolor}
\usetikzlibrary{trees,snakes,shapes.geometric}
\usepackage[outline]{contour}
\contourlength{1.5pt}
\usetikzlibrary{positioning}
\usetikzlibrary{%
  matrix,%
  calc,%
  arrows%
}

\setlist[enumerate]{topsep=0em, itemsep= -0em, parsep = 0 em, label=$(\alph*)$}
%%%%%%%%%%%%%%%%%%%%%%%%%% Commands %%%%%%%%%%%%%%%%%%%%%%%%%%%%%%

\pagenumbering{gobble}
\nocite{*}

\newcommand{\cK}{\mathcal{K}}

\DeclareMathOperator{\rep}{rep}

\DeclareMathOperator{\rk}{rk}

\DeclareMathOperator{\modd}{mod}

\DeclareMathOperator{\dimu}{\underline{dim}}

%%%%%%%%%%%%%%%%%%%%%%%%%%%%% Other Emptyset %%%%%%%%%%%%%%%%%%%%%

%%%%%%%%%%%%%%%%%%%%%%%%%%%%%%%%%%%%%%%%%%%%%%%%%%%%%%%%%%%%%%%%%%

\newtheorem{proposition}{Proposition}[section]
\newtheorem{Theorem}[proposition]{Theorem}
\newtheorem{Lemma}[proposition]{Lemma}

\newtheorem{corollary}[proposition]{Corollary}
%\newtheorem*{Definition}{Definition}
%%%%%%%%%%%%%%%%%%%%%%%%%%%% Without Numbers %%%%%%%%%%%%%%%%%%%%%

%%%%%%%%%%%%%%%%%%%%%%%%%%%% For Sections %%%%%%%%%%%%%%%%%%%%%%%%

%%%%%%%%%%%%%%%%%%%%%%%%%%%% With Letters %%%%%%%%%%%%%%%%%%%%%%%%

%%%%%%%%%%%%%%%%%%%%%%%%%%%% Environments %%%%%%%%%%%%%%%%%%%%%%%%
\newenvironment{example}[1][Example.]{\begin{trivlist}
\item[\hskip \labelsep {\bfseries #1}]}{\end{trivlist}}

\newenvironment{Remark}[1][Remark.]{\begin{trivlist}
\item[\hskip \labelsep {\bfseries #1}]}{\end{trivlist}}

\newenvironment{Definition}[1][Definition.]{\begin{trivlist}
\item[\hskip \labelsep {\bfseries #1}]}{\end{trivlist}}
\newenvironment{Acknowledgement}[1][Acknowledgement.]{\begin{trivlist}
\item[\hskip \labelsep {\bfseries #1}]}{\end{trivlist}}

%%%%%%%%%%%%%%%%%%%%%%%%%%%%%%%%%%%%%%%%%%%%%%%%%%%%%%%%%%%%%%%%%%
%%%%%%%%%%%%%%%%%%%%%%%%%%%%%%%% Document %%%%%%%%%%%%%%%%%%%%%%%%
%%%%%%%%%%%%%%%%%%%%%%%%%%%%%%%%%%%%%%%%%%%%%%%%%%%%%%%%%%%%%%%%%%
\author{Jie Liu}

\title{Dimension vectors of elementary modules of generalized Kronecker quivers  }

\address{SUSTech International Center For Mathematics,   Southern University of Science and Technology, shenzhen 518055, China }
\email{hbecun@foxmail.com}
\begin{document}

\rmfamily

%%%%%%%%%%%%%%%%%%%%%% TITELSEITE %%%%%%%%%%%%%%%%%%%%%%%%%%%%%%%%

%\pagenumbering{roman}
\thispagestyle{empty}

\maketitle

\begin{abstract}
Let $k$ be an algebraically closed field. The generalized or $n$-Kronecker quiver $K(n)$ is the  quiver with two vertices, called a source and a sink, and $n$ arrows from source to sink. Given   a finite-dimensional module $M$ of the path algebra $kK(n)=\cK_n$,   we consider its dimension vector $\dimu   M=(\dim_k M_1, \dim_k M_2)$. Let  $\mathbf{F}=\{(x,y)\mid \frac{2}{n}x\leq y\leq x\}$, and let  $(x,y)\in\mathbf{F}$.   We  construct a module $X(x,y)$ of $\cK_n$, and we prove  it to be elementary. Suppose that $\dimu M=(x,y)$. We show that:
\begin{enumerate}
\item if $M$ is an elementary  module, then $x<2n$, and

\item when $x+y=n+1$, the module  $M$ is  elementary  if and only if  $M$ is of the form  $X(x,y)$.
\end{enumerate}
 
\end{abstract}

\section{introduction}
Let $k$ be an algebraically closed field,  and let   $\rep_k(K(n))$ denote the category of  finite-dimensional representations of $K(n)$. We denote the Bernstein-Gelfand-Ponomarev (BGP)  reflection functor by  $\sigma: \rep_k(K(n))\rightarrow\rep_k(K(n))$. It is well-known that $\sigma^2=\tau$, where $\tau$ is the Auslander-Reiten translation of $\rep_k(K(n))$ (cf. \cite{Gabriel}).   Given the path algebra $\cK_n$ of $K(n)$, we  use $\modd \cK_n$ to denote the category of finite-dimensional modules of $\cK_n$.  Since there exists an equivalence between the categories $\rep_k(K(n))$ and  $\modd\cK_n$, we usually use the terms  "representation" and "module" interchangeably. Let $M\in \rep_k(K(n))$ be indecomposable. We say that $M$ is \textit{regular}, provided $\sigma^t M\neq (0)$ for all $t\in \mathbb{Z}$.  Let $M\in \modd \cK_n$ be regular. Then a module $M$ is said to be \textit{elementary} if there is no short exact sequence $(0)\rightarrow L\rightarrow M\rightarrow N\rightarrow (0)$ with $L,N\in\modd \cK_n$ being non-zero regular
 modules.

   There is a quadratic form   $q(x,y)=x^2+y^2-nxy$  on the dimension vectors of generalized Kronecker modules.  We say that the dimension vector $(x,y)$ is \textit{regular}, provided $q(x,y)<0$. For   generalized  Kronecker quiver $K(n)$,  $n\geq 4$,
 \[
\begin{tikzcd}
    1 \circ
    \arrow[r, draw=none, "{\vdots}" description]
    \arrow[r, bend left,        "\gamma_1"]
    \arrow[r, bend right, swap, "\gamma_n"]
    &
    \circ 2,
\end{tikzcd}
\]
 not much is known about its elementary modules.  Let $\mathbf{R}$ be the set of regular dimension vectors. By abusing notations, we introduce two maps  $\sigma, \delta$ on the set $\mathbf{R}$,  where $\sigma(x,y)=(nx-y,x)$ and $\delta(x,y)=(y,x)$ for all $ (x,y)\in \mathbf{R}$.   Claus Michael Ringel gave a description of elementary modules (cf. \cite{Claus2}).  We now give a restriction on the dimension vectors of elementary modules.  Since Otto Kerner and Frank Lukas  prove that there are only finitely many   $(\sigma)^2$-orbits of dimension vectors of elementary modules of $\mathcal{K}_n$ (cf. \cite{Otto}),  it is possible to classify their  dimension vectors.

\begin{Acknowledgement}
The author wants to thank Rolf Farnsteiner, Daniel Bissinger, Hao Chang and Jan-Niclas Thiel for discussing this paper.

\end{Acknowledgement}

 \section{preliminaries}

 A finite-dimensional representation $M=(M_1,M_2, (M(\gamma_i))_{1\leq i\leq n})$ over $K(n)$ consists of vector spaces $M_{j},j\in\{1,2\}$,  and $k$-linear maps $M(\gamma_i)_{1\leq i\leq n}: M_{1} \rightarrow M_{2}$ such that $\dim_{k}M=\dim_k M_{1}+\dim_k M_2$ is finite. A morphism $f: M\rightarrow N$ between two representations of $K(n)$ is a pair $(f_1,f_2)$ of $k$-linear maps $f_j: M_1\rightarrow M_2$ $(j\in\{1,2\})$ such that for each arrow $\gamma_i: 1\rightarrow 2$,  there is a commutative diagram

\begin{center}
$\begin{array}[c]{ccc}
M_{1}&\stackrel{M(\gamma_i)}{\longrightarrow}&M_2\\
\downarrow\scriptstyle{f_1}&&\downarrow\scriptstyle{f_2}\\
N_1&\stackrel{N(\gamma_i)}{\longrightarrow}&N_2.
\end{array}$
\end{center}
Normally, we use  $S(i)$ to denote the simple representation and $P(i)$ (resp. $I(i)$) to denote the projective  (resp. injective) representation at the vertexes $i, i\in \{1,2\}.$    
 
  There is a function called dimension vector on $\modd \cK_n$
\begin{center}
\underline{dim}: $\modd \cK_n\rightarrow \mathbb{Z}^2, M \mapsto (\dim_k M_1, \dim_k M_2 )$.
\end{center}
If  $(0)\rightarrow L \rightarrow M \rightarrow N\rightarrow (0)$ is an exact sequence in $\modd \cK_n$, then  $\dimu L+\dimu N= \dimu M$.  We denote by $<-,->$    the  bilinear form \begin{center}
$<-,->: \mathbb{Z}^2 \times \mathbb{Z}^2 \rightarrow \mathbb{Z}, ((x_1,x_2),(y_1,y_2))\mapsto (x_1y_1+x_2y_2)-nx_1y_2.$
\end{center}
This bilinear form  coincides with the Euler-Ringel form on the Grothendieck group $K_0(\cK_n)\cong \mathbb{Z}^2$. Then we denote the corresponding quadratic form by
\begin{center}
$q:\mathbb{Z}^2\rightarrow \mathbb{Z},x\mapsto <x,x>$.

\end{center}

\begin{Definition}
A dimension vector $(x,y)$ is said to be regular,  provided $q(x,y)<0$. 

\end{Definition}

Let $\sigma, \sigma^-$ be the Bernstein-Gelfand-Ponomarev reflections (or BGP-functors) of $K_0(\cK_n)=\mathbb{Z}^2$ given by $\sigma(x,y)=(nx-y,x), \sigma^-(x,y)=(y,ny-x)$. Moreover,  we still use   $\sigma, \sigma^-$ to denote the BGP functors of $\modd \cK_n$ (we  take  the opposite of the $n$-Kronecker quiver to be again the $n$-Kronecker quiver). Let $M\in\modd \cK_n$ be an indecomposable module.  Then

\begin{enumerate}
\item[(1)]    $M$ is said to be  \textit{preinjective},  provided there exists  $t\in \mathbb{N}_0$ such that  $\sigma^{-t} M =(0)$.

\item[(2)]  $M$ is said to be \textit{preprojective},  provided there exists  $t\in \mathbb{N}_0$ such that $\sigma^{t} M=(0)$. 

\item[(3)] A module is said to be \textit{regular},  provided it does not contain indecomposable  direct summand which is preprojective or preinjective.

\end{enumerate}
If $M\in\modd\cK_n$ is an indecomposable module different from $S(2)$, then  $\dimu \sigma M=\sigma\dimu M$; similarly, if $M$ is indecomposable and different from $S(1)$, then we have $\dimu \sigma^-M=\sigma^- \dimu M$.  If module $M$ is an elementary module, then module $\sigma^t M$  is also an elementary module for all $t\in \mathbb{Z}$ \cite[VII. Corollary 5.7$(d)$]{Assem1}.

\begin{Definition}
The dimension vector $(x,y)$ is said to be \textit{elementary} (\textit{preprojective}, or \textit{preinjective}) provided there exists an  elementary  (preprojective, or preinjective)  module $M$ with $\dimu M=(x,y)$.
\end{Definition}

We now consider  the set   of regular dimension vectors  $\mathbf{R}$. We have seen that $\sigma$ maps $\mathbf{R}$ onto $\mathbf{R}$. In fact, there is another transformation $\delta$ on $K_0(\cK_n)$ defined by $\delta(x,y)=(y,x)$, and  it also sends $\mathbf{R}$ onto $\mathbf{R}$.  Let $M\in \modd \cK_n$.  Then $\delta (\dimu M)=\dimu M^*$,  where $M^*$ is the dual representation of $M$, that is,  $M^*=(M^*_1,M^*_2,(M^*(\gamma_i))_{1\leq i\leq n})$, $M^*_1$  is the $k$-dual of $M_2$ and $M^*_2$ is the $k$-dual of $M_1$, the map $M^*(\gamma_i)$ is the $k$-dual of $M(\gamma_i)$. We put

\begin{center}
$\mathbf{F}=\{(x,y)\mid \frac{2}{n}x\leq y\leq x\}.$
\end{center}

\begin{Lemma}\cite[Section 2. Lemma]{Claus2}
The set $\mathbf{F}$ is a fundamental domain for the action of the group generated by $\delta$ and $\sigma$ on the set $\mathbf{R}.$ 
\end{Lemma}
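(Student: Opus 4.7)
The plan is to recognize $\mathbf{F}$ as the closed Weyl chamber of a rank-two reflection group acting on $K_0(\cK_n)\otimes\mathbb{R}$, and then verify the two defining properties of a fundamental domain separately.

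First I would reorganize the generators. The map $\delta$ is a $q$-reflection with fixed line $\{y=x\}$. Setting $s:=\sigma\delta$, one computes $s(x,y)=(ny-x,y)$, which satisfies $s^2=\id$ and has fixed line $\{y=2x/n\}$. Since $\langle\delta,\sigma\rangle=\langle\delta,s\rangle$ and $s\delta=\sigma$ has infinite order (its eigenvalues $(n\pm\sqrt{n^2-4})/2$ are real irrational with product $1$ for $n\geq 3$), the group $\Gamma:=\langle\delta,\sigma\rangle$ is the infinite dihedral group. Crucially, the two boundary rays of $\mathbf{F}$ are precisely the fixed lines of the reflection generators $\delta$ and $s$.

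For existence — every $\Gamma$-orbit meets $\mathbf{F}$ — I would induct on the height $h(x,y):=x+y\in\mathbb{Z}_{>0}$. Given $(x,y)\in\mathbf{R}$ with $(x,y)\notin\mathbf{F}$, at least one of $y>x$ or $y<2x/n$ holds. In the first case apply $\delta$, swapping coordinates but preserving the height. In the second case apply $\sigma^{-1}$, yielding $(y,ny-x)$. Here $ny-x>0$ because regularity forces $y/x\in\bigl((n-\sqrt{n^2-4})/2,\,(n+\sqrt{n^2-4})/2\bigr)$, in particular $y>x/n$. The new height equals $(n+1)y-x$, and $(n+1)y-x-(x+y)=ny-2x<0$ under the hypothesis $y<2x/n$. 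Since $\Gamma$ preserves $\mathbf{R}$ and the height strictly decreases at each $\sigma^{-1}$-step (interspersed with at most one $\delta$), the algorithm terminates in $\mathbf{F}$.

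For uniqueness — the interior $\mathbf{F}^\circ$ meets each orbit at most once — I would argue by induction on the reduced word length of $\gamma\in\Gamma$ in the alphabet $\{\delta,s\}$ that $\gamma\mathbf{F}^\circ\cap\mathbf{F}^\circ=\emptyset$ for $\gamma\neq 1$. The base case is two direct computations: parametrising by $r=y/x$, one has $\delta\colon r\mapsto 1/r$ sending $[2/n,1]$ to $[1,n/2]$, and $s\colon r\mapsto r/(nr-1)$ sending $[2/n,1]$ to $[1/(n-1),2/n]$; both image intervals intersect $[2/n,1]$ only at an endpoint. The inductive step follows from the standard Coxeter-theoretic fact that any reduced word ending in $\delta$ (resp.\ $s$) sends $\mathbf{F}^\circ$ across the wall $\{y=x\}$ (resp.\ $\{y=2x/n\}$), hence strictly away from $\mathbf{F}^\circ$.

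The main obstacle I expect is the uniqueness step: while the existence algorithm is a routine height-reduction, showing that no combination of $\delta$'s and $\sigma$'s recycles the interior of $\mathbf{F}$ back to itself requires either invoking the exchange/deletion condition for the infinite dihedral Coxeter group $\Gamma$, or a careful direct induction tracking which side of each wall the translate $\gamma\mathbf{F}$ lies on. Either way, the essence of the lemma is that $\mathbf{F}$ is bounded by exactly the two walls of the generating reflections of $\Gamma$, so the result is the standard Coxeter-group fundamental domain statement specialised to this rank-two setting.
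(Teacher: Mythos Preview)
The paper does not give a proof of this lemma at all: it is stated with the citation \cite[Section 2. Lemma]{Claus2} and immediately followed by the next lemma, so there is nothing to compare your argument against. Your proposal is therefore not a reconstruction of the paper's reasoning but an independent proof of the cited result.

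That said, your argument is essentially correct. Rewriting $\Gamma=\langle\delta,\sigma\rangle$ as $\langle\delta,s\rangle$ with $s=\sigma\delta$ and recognising it as the infinite dihedral group whose two mirrors are exactly the boundary rays $\{y=x\}$ and $\{y=2x/n\}$ of $\mathbf{F}$ is the right structural observation, and it reduces the statement to the standard fundamental-chamber fact for rank-two Coxeter groups. Two small points deserve one extra sentence each. For existence, you should note explicitly that $\delta$ and $\sigma^{-1}$ alternate (after a $\delta$-step the first coordinate dominates, so the next step, if any, is $\sigma^{-1}$), and that on \emph{integer} vectors the height drop $ny-2x$ is a strictly negative integer, so the positive-integer height reaches $\mathbf{F}$ in finitely many steps. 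For uniqueness, the phrase ``standard Coxeter-theoretic fact'' is doing real work; since the group is only infinite dihedral, it would cost little to spell out the one-line induction on reduced length in $\{\delta,s\}$ showing that a word ending in $\delta$ sends $\mathbf{F}^\circ$ into $\{y>x\}$ and one ending in $s$ sends it into $\{y<2x/n\}$.
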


\begin{Lemma} \cite[Lemma 3.1]{Claus2}\label{regular}
Assume that $M\in\modd \mathcal{K}_n$ is a regular module with a proper non-zero submodule $U$ such that both dimension vectors $\dimu U$ and $\dimu M/ U$ are regular. Then $M$ is not elementary.
\end{Lemma}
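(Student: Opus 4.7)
The plan is to construct a new short exact sequence $0 \to W \to M \to N \to 0$ with both $W$ and $N$ non-zero regular modules, which by definition forces $M$ not to be elementary. The given sequence $0 \to U \to M \to V \to 0$ (with $V := M/U$) will not suffice directly, because the hypothesis concerns only the \emph{dimension vectors} of $U$ and $V$, while either module may itself contain preprojective or preinjective summands.

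First I would decompose $U = U_p \oplus U_r \oplus U_i$ and $V = V_p \oplus V_r \oplus V_i$ into preprojective, regular, and preinjective summands. Standard Hom-vanishing for hereditary algebras ($\Hom(\text{preinj.}, \text{reg.}) = 0$ and $\Hom(\text{reg.}, \text{preproj.}) = 0$) implies that the inclusion $U \hookrightarrow M$ into the regular module $M$ forces $U_i = 0$, while the surjection $M \to V$ forces $V_p = 0$. Moreover, any non-zero direct sum of preprojective indecomposables has dimension vector $(x,y)$ whose ratio $x/y$ lies strictly below the smaller root $r_- := (n - \sqrt{n^2-4})/2$ of $t^2 - nt + 1$, while any non-zero direct sum of preinjective indecomposables has ratio strictly above the larger root $r_+ := (n + \sqrt{n^2-4})/2$; since $q(x,y) = y^2 \bigl((x/y)^2 - n(x/y) + 1\bigr)$, regular dimension vectors have ratio strictly in $(r_-, r_+)$. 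Hence the hypothesis that $\dimu U$ and $\dimu V$ are regular forces $U_r \neq 0$ and $V_r \neq 0$.

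Next, set $L := U_r$. The quotient $M/L$ sits in $0 \to U_p \to M/L \to V \to 0$, and a further Hom-vanishing check (a surjection $M \to M/L \to (M/L)_p$ would give a non-zero map from the regular $M$ to a preprojective) shows $(M/L)_p = 0$. The crux of the argument is the claim $(M/L)_r \neq 0$: since $\dimu(M/L) = \dimu U_p + \dimu V$, the ratio of $\dimu(M/L)$ is a weighted average of the ratio of $\dimu U_p$ (below $r_-$) and the ratio of $\dimu V$ (below $r_+$, by the regularity of $\dimu V$), hence strictly below $r_+$; but a purely preinjective module has ratio strictly above $r_+$. Setting $J := (M/L)_i$ and taking $W \subseteq M$ to be the preimage of $J$ under $M \to M/L$, one obtains $M/W \cong (M/L)_r$, non-zero and regular; while $W$ itself is an extension of $J$ by $L = U_r$, and a final Hom-vanishing step (no preinjective summand since $W \subseteq M$; no preprojective summand, because any such map vanishes on the regular $L$ and thus factors through the preinjective $J$) shows $W$ is regular.

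The principal obstacle is the ratio estimate establishing $(M/L)_r \neq 0$: this is the unique step where the regularity of $\dimu V$ is used in an essential way, and without it one cannot exclude the possibility that $M/U_r$ is purely preinjective, which would collapse the construction. All the surrounding Hom-vanishing deductions are routine structural facts about hereditary module categories.
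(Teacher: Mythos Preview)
The paper does not give its own proof of this lemma; it merely cites it from Ringel's manuscript \cite[Lemma~3.1]{Claus2} and uses it as a black box. So there is nothing in the paper to compare against.

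Your argument is sound. The essential moves---using $\Hom(\text{preinj.},\text{reg.})=0$ and $\Hom(\text{reg.},\text{preproj.})=0$ to strip $U_i$ and $V_p$, then passing to $L=U_r$ and controlling the ratio of $\dimu(M/L)=\dimu U_p+\dimu V$ against the root $r_+$ to rule out a purely preinjective quotient, and finally pulling back the preinjective part of $M/L$ to obtain a regular $W$ with regular $M/W$---are precisely the natural manoeuvres, and each step checks out. Two minor cosmetic points: (i) the ``weighted average'' phrasing is better expressed as summing the inequalities $x_1\le r_+y_1$ (for $\dimu U_p$, with equality only when $U_p=0$) and $x_2<r_+y_2$ (for $\dimu V$), which disposes of the degenerate case $U_p=0$ uniformly; (ii) the separate observation $V_r\neq 0$ is not actually needed once you run the ratio argument for $M/L$, since that step already consumes only the regularity of $\dimu V$.
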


\section{Dimension vectors of elementary modules }

By duality and Lemma \ref{regular},  a module $M\in$ mod $\cK_n$ is elementary if and only if its dual $M^*$ is elementary. That is, if $\dimu M=(x,y)$, then $(x,y)$ is elementary if and only if $(y,x)$ is elementary. Hence we only need to study one of these two dimension vectors.

\begin{Lemma}\cite[Lemma 14.11]{Daniel}\label{less n}
Let $M\in \modd \cK_n$  be an elementary module with $\dimu M=(x,y)$ and $y\leq x\leq y+n-2.$ Then $x< n.$
\end{Lemma}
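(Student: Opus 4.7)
The plan is to argue by contradiction via Lemma~\ref{regular}: assume $M$ is elementary with $\dimu M=(x,y)$, $y\le x\le y+n-2$ and $x\ge n$, and exhibit a proper non-zero submodule $U\subsetneq M$ with both $\dimu U$ and $\dimu(M/U)$ regular.

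Regularity of $M$ forces $\Hom(S(1),M)=0$ (no preinjective maps into a regular module), so the combined linear map
\[
F_M\colon M_1\longrightarrow M_2\otimes_k k^n,\qquad v\longmapsto \bigl(M(\gamma_1)(v),\ldots,M(\gamma_n)(v)\bigr)
\]
is injective. For $0\neq v\in M_1$, the cyclic submodule $\langle v\rangle\subseteq M$ has dimension vector $(1,r(v))$, where $r(v)=\dim\sum_i k\cdot M(\gamma_i)(v)\ge 1$ equals the rank of $F_M(v)$ viewed as a $y\times n$ matrix.

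First I check the arithmetic. The identity $q(x-1,y-1)=q(x,y)+(n-2)(x+y-1)$, together with $q(x,y)<0$ and the hypotheses $n\ge 4$, $x\le y+n-2$ (which with $x\ge n$ forces $y\ge 2$), yields $q(x-1,y-1)<0$. Hence the set
\[
B=\bigl\{b\in\{1,\ldots,n-1\}:q(x-1,y-b)<0\bigr\}
\]
contains $1$; convexity of the defining quadratic in $y-b$ makes $B$ an interval $[1,b_\star]$. Any submodule $\langle v\rangle$ with $r(v)\in B$ produces a short exact sequence $0\to\langle v\rangle\to M\to M/\langle v\rangle\to 0$ whose outer terms have regular dimension vectors.

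Second, I produce such $v$ geometrically. The projective subspace $\mathbb{P}(F_M(M_1))\cong\mathbb{P}^{x-1}$ sits in $\mathbb{P}(M_2\otimes k^n)=\mathbb{P}^{yn-1}$, and the determinantal variety $D_{\le b_\star}$ of tensors of rank $\le b_\star$ has projective dimension $b_\star(y+n-b_\star)-1$. The projective dimension theorem guarantees
\[
\mathbb{P}(F_M(M_1))\cap D_{\le b_\star}\;\neq\;\emptyset
\]
as soon as $(x-1)+b_\star(y+n-b_\star)-1\ge yn-1$, and $x\ge n$ combined with the optimal choice of $b_\star$ enforces this inequality. Any $v$ in the intersection satisfies $r(v)\in B$, producing the desired $U=\langle v\rangle$. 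In boundary cases where the $(1,b)$-construction falls short (for instance $y\in[3,n]$), one refines the argument to submodules of type $(a,b)$ with $a\ge 2$: here $U_1\in\Gr(a,M_1)$ is sought in a Schubert-type locus cut out by the rank condition $\dim \sum_i M(\gamma_i)(U_1)\le b$, and the analogous dimension count on $\Gr(a,M_1)\times\Gr(b,M_2)$ produces a non-trivial solution.

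The main obstacle is coordinating the arithmetic set $B$ of admissible rank values with the geometric existence of the corresponding submodule. The strip $y\le x\le y+n-2$ is the sharp region in which both can be arranged: the upper bound keeps the quotient-regularity interval $B$ non-trivial, while the lower bound $x\ge n$ provides the dimensional margin needed for the projective intersection (or its Grassmannian analogue). Outside this strip one or the other constraint breaks, and the argument would no longer close.
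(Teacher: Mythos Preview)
First, note that the paper does not contain its own proof of this lemma: it is quoted verbatim from \cite[Lemma~14.11]{Daniel} and left without argument. So there is no in-paper proof to compare against; I can only assess your proposal on its own terms.

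Your strategy---manufacture a proper submodule $U$ with $\dimu U$ and $\dimu(M/U)$ both regular and invoke Lemma~\ref{regular}---is reasonable, but the execution has two genuine gaps.

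\textbf{(i) The arithmetic step is mis-argued.} The identity $q(x-1,y-1)=q(x,y)+(n-2)(x+y-1)$ adds a \emph{positive} quantity to $q(x,y)$, so it cannot by itself yield $q(x-1,y-1)<0$ from $q(x,y)<0$. The conclusion happens to be true under the hypotheses $y\le x\le y+n-2$, but it requires a separate estimate (e.g.\ bounding the ratio $(x-1)/(y-1)$), not the identity you wrote.

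\textbf{(ii) The geometric existence argument is incomplete, and you acknowledge this.} For the cyclic-submodule approach the required inequality is $x\ge (y-b_\star)(n-b_\star)+1$. This already fails in small cases: take $n\ge 4$, $x=n+1$, $y=3$ (so $y\le x=y+n-2$). Then $q(n,2)=4-n^2<0$ but $q(n,1)=1>0$, so $b_\star=1$, and the inequality reads $n+1\ge 2(n-1)+1=2n-1$, which is false for $n\ge 4$. Your response is to ``refine the argument to submodules of type $(a,b)$ with $a\ge 2$'' via a Schubert-type locus in $\Gr(a,M_1)\times\Gr(b,M_2)$, but no dimension count, no description of the locus, and no verification that the resulting $(a,b)$ and $(x-a,y-b)$ are both regular is given. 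That paragraph is a wish, not a proof. The closing paragraph likewise asserts that ``both constraints can be arranged'' in the strip without supplying the arrangement.

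In short: the overall plan is plausible and in the spirit of Lemma~\ref{regular}, but the cyclic case does not cover the full range and the promised refinement is missing. As written this is a sketch, not a proof.
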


Let  $V^i$ denote the $i$-dimensional vector space over the field $k,i\in \mathbb{N}$. Let $L(V^j,V^i)$ be the linear space consisting of all $k$-linear transformations from  $V^j$ to  $V^i$. Let $l(r,j,i)=\dim_k L'$, where  $L'  \subseteq L(V^j,V^i)$ is  the largest linear subspace such
that  $\rk v=r$ for all $0\neq v\in L'$.

\begin{Theorem} \cite[Theorem]{West}\label{wt}
Let $2\leq r\leq j\leq i$ be integers. Then 

\begin{center}
$i-r+1\leq l(r,j,i)\leq i+j-2r+1$.
\end{center}

\end{Theorem}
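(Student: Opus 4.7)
I prove the two bounds separately.

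\emph{Lower bound.} I construct explicitly an $(i-r+1)$-dimensional constant-rank-$r$ subspace. Fix bases $\{e_1, \dots, e_j\}$ of $V^j$ and $\{f_1, \dots, f_i\}$ of $V^i$, and for each $t \in \{0, 1, \dots, i - r\}$ define $\phi_t \in L(V^j, V^i)$ by $\phi_t(e_l) = f_{l+t}$ for $1 \leq l \leq r$ and $\phi_t(e_l) = 0$ for $l > r$. For a nonzero combination $\phi = \sum_t a_t \phi_t$ with $t_0 = \min\{t : a_t \neq 0\}$, the images $\phi(e_1), \dots, \phi(e_r)$ have leading terms $a_{t_0} f_{l + t_0}$ in distinct coordinates $l + t_0$. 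Inductively comparing coefficients of $f_{1 + t_0}, f_{2 + t_0}, \dots$ in any dependence relation $\sum_l \lambda_l \phi(e_l) = 0$ forces every $\lambda_l = 0$, so $\rk \phi = r$. Hence $L' := \mathrm{span}(\phi_0, \dots, \phi_{i-r})$ achieves the lower bound.

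\emph{Upper bound.} Let $L' \subseteq L(V^j, V^i)$ be a constant-rank-$r$ subspace and fix a nonzero $v_0 \in L'$. Choose bases placing $v_0$ as the $i \times j$ block matrix with $I_r$ in the top-left corner and zeros elsewhere, and correspondingly decompose each $v \in L'$ into blocks $A_v, B_v, C_v, D_v$ of sizes $r \times r$, $r \times (j-r)$, $(i-r) \times r$, $(i-r) \times (j-r)$. Since $v_0 + tv$ has rank $r$ for all $t \in k$, on the nonempty Zariski open set where $I_r + tA_v$ is invertible the Schur complement computation forces the identity $tD_v = t^2 C_v(I_r + tA_v)^{-1} B_v$; setting $t = 0$ gives $D_v = 0$, and back-substituting gives the formal identity $C_v(I_r + tA_v)^{-1} B_v \equiv 0$, which unpacks to $C_v A_v^s B_v = 0$ for all $s \geq 0$. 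Polarizing in $v$ produces the symmetric bilinear relation $C_{v_1} B_{v_2} + C_{v_2} B_{v_1} = 0$ for all $v_1, v_2 \in L'$.

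Define $\beta(v) = B_v$ and $\gamma(v) = C_v$. Any $v \in \ker\beta \cap \ker\gamma$ has only its $A$-block possibly nonzero, with $\rk v = \rk A_v$; requiring this rank to be $r$ (when $v \neq 0$) means $A_v$ is invertible. Since $k$ is algebraically closed, the polynomial $\det(A_0 + tA_1)$ of degree at most $r$ in $t$ has a root in $k$, so any subspace of invertible matrices in $M_r(k)$ has dimension at most $1$; hence $\ker\beta \cap \ker\gamma = \langle v_0 \rangle$. \emph{The main obstacle} is to bound $\dim\bigl((\beta, \gamma)(L')\bigr) \leq i + j - 2r$, since the ambient codomain $L(V^{j-r}, V^r) \oplus L(V^r, V^{i-r})$ has the much larger dimension $r(i + j - 2r)$; the isotropy-type relation $C_{v_1} B_{v_2} + C_{v_2} B_{v_1} = 0$ must be leveraged essentially. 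I would pick $v_* \in L'$ maximizing $\rho := \rk B_{v_*}$ and use $C_{v'} B_{v_*} = -C_{v_*} B_{v'}$, together with the individual constraint $\im B_v \subseteq \ker C_v$, to confine $\im B_{v'}$ and $\ker C_{v'}$ within fixed subspaces of $V^r$; a careful dimension count, tracking how the ranks of the $B_v$ and $C_v$ must fit into a single flag of $V^r$, then yields $\dim L' \leq 1 + (i + j - 2r) = i + j - 2r + 1$.
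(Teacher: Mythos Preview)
The paper does not prove this theorem at all: it is quoted from Westwick's paper \cite{West} and used as a black box. So there is no ``paper's own proof'' to compare against, and your proposal should be judged on its own merits.

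Your lower bound is fine. The shifted--identity construction is exactly the Toeplitz pattern that the paper itself exploits later in the module $X(x,y)$ (Lemma~\ref{X}), and your triangular argument that any nonzero combination has rank $r$ is correct.

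The upper bound, however, has a genuine gap. Everything up to and including $D_v=0$, the relations $C_vA_v^sB_v=0$, the polarized identity $C_{v_1}B_{v_2}+C_{v_2}B_{v_1}=0$, and the computation $\ker\beta\cap\ker\gamma=\langle v_0\rangle$ is correct. But the sentence beginning ``I would pick $v_*\in L'$ maximizing $\rho:=\rk B_{v_*}$ \dots\ a careful dimension count \dots\ then yields'' is not a proof; it is a hope. The inequality you still need,
\[
\dim\bigl((\beta,\gamma)(L')\bigr)\ \le\ (j-r)+(i-r),
\]
is the entire content of Westwick's theorem, and nothing you have written forces it. The bilinear relation $C_{v_1}B_{v_2}+C_{v_2}B_{v_1}=0$ constrains the \emph{products} $C\cdot B$, but it does not by itself bound the dimension of the image of $(\beta,\gamma)$ inside $M_{r\times(j-r)}\oplus M_{(i-r)\times r}$, whose dimension is $r(i+j-2r)$, not $i+j-2r$. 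To see that something substantial is missing, look at the extreme case $j=r$: then $B_v$ has no columns, your polarization identity is vacuous, and you are left needing to show that a space of $i\times r$ matrices in which every nonzero element has full column rank has dimension at most $i-r+1$. Your outlined flag argument gives no traction here. Westwick's actual argument (and the earlier work of Flanders and Beasley it builds on) requires a more delicate induction or a pencil--theoretic analysis that your sketch does not supply.

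In short: the paper defers to \cite{West}; your lower bound is complete; your upper bound stops precisely at the hard step.
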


 We define a quadratic function $f(w):=w^2-nw+1=(w-\frac{n}{2})^2-\frac{n^2}{4}+1,w\in k,n\in \mathbb{N}$.  Since $w^2$ has a positive coefficient, we have $\underset{a\leq w\leq b}{\max}f(w)=\max \{f(a),f(b)\}$ for all $a\leq b \in \mathbb{R}$.
 
  Let $(x,y)\in \mathbf{F},y\neq n-1$. We have
 \begin{equation}
\begin{split}
q(x-1, y-(n-1)) & = (x-1)^2+(y-(n-1))^2-n(x-1)(y-(n-1)) \\ &  =(y-(n-1))^2((\frac{x-1}{y-(n-1)})^2+1-n\frac{x-1}{y-(n-1)}) \\& = (y-(n-1))^2(t^2+1-nt) \\& = (y-(n-1))^2 f(t), 
\end{split}
\end{equation}
where $t=\frac{x-1}{y-(n-1)}$. Then $q(x-1,y-(n-1))<0$ if and only if $f(t)<0$. 

Let $\Lambda_n$ be the  space spanned by arrows of $K(n)$. It is an $n$-dimensional vector space with basis $\{\gamma_i\mid 1\leq i\leq n\}$. Let $M\in$ mod $\cK_n$. We put  $\gamma_i.m=M(\gamma_i)(m)$ for all $m\in M$.  In general, we always assume that $n\geq 3$.

\begin{Lemma}\label{vector}
 Suppose that $(x,y)\in \mathbf{F}$ and $y\geq 2(n-1)$. Then $(x-1, y-(n-1))  $ is a regular dimension vector.

\end{Lemma}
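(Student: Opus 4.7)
By the identity displayed in equation~(1) just before the statement, one has
$q(x-1, y-(n-1)) = (y-(n-1))^2 f(t)$
with $t = (x-1)/(y-(n-1))$. The hypothesis $y \geq 2(n-1) > n-1$ (recall $n \geq 3$) guarantees that $t$ is well defined and $(y-(n-1))^2 > 0$, so the claim reduces to showing $f(t) < 0$. My plan is to trap $t$ inside an explicit closed interval and then exploit the observation recorded just before Lemma~\ref{vector} that the convex parabola $f$ attains its maximum on any closed interval at one of its endpoints.

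For the lower bound on $t$, combine $y \leq x$ (the right-hand inequality defining $\mathbf{F}$) with $n-1 \geq 1$ to get $x-1 \geq y-1 \geq y-(n-1)$, whence $t \geq 1$. For the upper bound, the other inequality $\tfrac{2}{n}x \leq y$ rewrites as $x \leq ny/2$, so
\[
t \;\leq\; \frac{ny/2 - 1}{y-(n-1)} \;=\; \frac{n}{2} \;+\; \frac{n(n-1)/2 - 1}{y-(n-1)}.
\]
For $n \geq 3$ the numerator $n(n-1)/2 - 1$ is strictly positive, so the right-hand side is monotonically decreasing in $y$; substituting the minimum allowed value $y = 2(n-1)$ yields the uniform bound $t \leq n - \tfrac{1}{n-1}$.

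It remains to check that $f$ is negative at both endpoints of $\bigl[1,\; n - \tfrac{1}{n-1}\bigr]$. Directly, $f(1) = 2-n < 0$ for $n \geq 3$, and a short computation gives
\[
f\!\left(n - \frac{1}{n-1}\right) \;=\; \frac{2-n}{(n-1)^{2}} \;<\; 0.
\]
The convexity observation then forces $f(t) \leq \max\!\bigl\{f(1),\, f(n-\tfrac{1}{n-1})\bigr\} < 0$, and plugging back into equation~(1) gives $q(x-1, y-(n-1)) < 0$, as required.

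None of the steps is a genuine obstacle; the only mildly delicate point is the monotonicity argument that produces the uniform upper bound on $t$, and it is exactly this step that explains why the hypothesis is $y \geq 2(n-1)$ rather than the weaker $y > n-1$: a smaller $y$ would push $t$ past the larger root $\tfrac{1}{2}(n + \sqrt{n^{2}-4})$ of $f$, where $f$ becomes nonnegative.
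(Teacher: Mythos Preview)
Your proof is correct and follows essentially the same approach as the paper: both bound $t=(x-1)/(y-(n-1))$ in the interval $[1,\,n-\tfrac{1}{n-1}]=[1,\,\tfrac{n(n-1)-1}{n-1}]$ and then verify $f<0$ at both endpoints. The only cosmetic difference is that the paper obtains the upper bound by a geometric argument about the intersection of the lines $y=\tfrac{2}{n}x$ and $y=\tfrac{n-1}{n(n-1)-1}(x-1)+(n-1)$, whereas you obtain the identical bound via the cleaner monotonicity-in-$y$ argument.
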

\begin{proof}
Since $y\leq x$ and $n\geq 3$, we have $y-(n-1) \leq x-1.$ On the other hand, the inequalities $y\geq 2(n-1)$ and $y\geq\frac{2}{n}x$ imply  $y-(n-1)\geq\frac{n-1}{n(n-1)-1}(x-1).$  This is because $\frac{2}{n}>\frac{n-1}{n(n-1)-1}$ and these two lines $y_1=\frac{2}{n}x$ and $y_2=\frac{n-1}{n(n-1)-1}(x-1)+(n-1)$ intersect at the point $(n(n-1), 2(n-1))$.  Hence we have $ \frac{n-1}{n(n-1)-1}(x-1) \leq y-(n-1)\leq x-1$ and  $\frac{x-1}{y-(n-1)}\in [1, \frac{n(n-1)-1}{n-1}].$  Let $t=\frac{x-1}{y-(n-1)}$.  Since $f(1)\leq f(\frac{n(n-1)-1}{n-1})$, we obtain   

\begin{equation}
\begin{split}
f(t)=t^2+1-nt \leq\underset{1\leq t\leq \frac{n(n-1)-1}{n-1}}{\max} f(t) & = f(\frac{n(n-1)-1}{n-1}) \\ & = (\frac{n(n-1)-1}{n-1})^2+1-n\frac{n(n-1)-1}{n-1} \\ & =(n- \frac{1}{n-1})^2+1-n^2+\frac{n}{n-1} \\ &= n^2- 2-\frac{2}{n-1}+\frac{1}{(n-1)^2}+1-n^2+1+\frac{1}{n-1}\\ &=\frac{2-n}{(n-1)^2} <0.
\end{split}
\end{equation}

Hence $(x-1,y-(n-1))$ is a regular dimension vector. 
\end{proof}

\begin{Lemma}\label{fc}
Let $M\in\modd \cK_n$  be a  module such that $\dimu M=(x,y)$ and  $n-1\leq y\leq x+n-2$. Then $M$ has a submodule $U$ with dimension vector $(1,n-1)$.
\end{Lemma}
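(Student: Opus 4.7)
The plan is to reduce the existence of a submodule of dimension vector $(1,n-1)$ to an algebraic condition on the arrows. A submodule $U$ with $\dimu U=(1,n-1)$ is determined by picking a nonzero $m\in M_1$ (setting $U_1=km$) and an $(n-1)$-dimensional subspace $U_2\subseteq M_2$ containing $\Lambda_n.m$. Since $y\geq n-1$ gives enough room in $M_2$, everything boils down to finding a nonzero $m\in M_1$ with $\dim_k(\Lambda_n.m)\leq n-1$, equivalently finding a nonzero $\gamma\in\Lambda_n$ and nonzero $m\in M_1$ with $\gamma.m=0$. Note that $y\geq n-1$ together with $y\leq x+n-2$ forces $x\geq 1$, so such an $m$ exists in principle.

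First I would consider the structure map $\alpha\colon \Lambda_n\to L(M_1,M_2)$ sending $\gamma\mapsto M(\gamma)$ and set $L'=\alpha(\Lambda_n)$. If $\alpha$ fails to be injective, any $\gamma\in\ker\alpha\setminus\{0\}$ together with any nonzero $m$ works and we are done immediately. So I would suppose $\alpha$ is injective, whence $\dim_k L'=n$, and argue by contradiction: if the desired pair $(m,\gamma)$ did not exist, then every nonzero $\phi\in L'$ would satisfy $\ker\phi=0$, i.e.\ every nonzero $\phi\in L'$ has rank equal to $\dim_k M_1=x$.

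The main step is to derive a contradiction from this "constant rank $x$" condition using Theorem \ref{wt}. I would split by cases. If $x>y$, no map $V^x\to V^y$ can be injective, contradicting the existence of a nonzero $\phi\in L'$. If $2\leq x\leq y$, West's bound gives $n=\dim_k L'\leq l(x,x,y)\leq y-x+1$, i.e.\ $y\geq x+n-1$, contradicting the hypothesis $y\leq x+n-2$. The remaining case $x=1$ must be handled separately since West's theorem requires $r\geq 2$: here the hypothesis forces $y=n-1$, so $L'\subseteq L(V^1,V^{n-1})\cong V^{n-1}$ has dimension at most $n-1<n$, contradicting injectivity of $\alpha$.

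The main obstacle is simply bookkeeping the boundary case $x=1$ where West's theorem does not apply directly; fortunately the constraint $y\leq x+n-2$ pins $y$ down to $n-1$ in that case, and then a trivial dimension count settles it. Once the contradiction is obtained, one picks $m\neq 0$ and $\phi\in L'\setminus\{0\}$ with $\phi(m)=0$, takes $U_1=km$, and chooses any $(n-1)$-dimensional subspace $U_2\subseteq M_2$ containing $\Lambda_n.m$ (which has dimension at most $n-1$ by construction); this yields the desired submodule $U$ with $\dimu U=(1,n-1)$.
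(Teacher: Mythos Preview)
Your argument is correct. The reduction to finding a nonzero $m\in M_1$ with $\dim_k(\Lambda_n.m)\leq n-1$, the reformulation as the existence of a nonzero $\gamma\in\Lambda_n$ with $M(\gamma)$ not injective, and the case split using Theorem~\ref{wt} all go through exactly as you describe; the boundary case $x=1$ is handled cleanly by the dimension count $\dim_k L(V^1,V^{n-1})=n-1<n$.

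The paper does not spell out its own proof here but defers to Ringel's argument for the $3$-Kronecker case, to be adapted ``mutatis mutandis''. Ringel's original proof (for $n=3$) proceeds by an elementary linear-algebra argument showing that three linear maps $V^x\to V^y$ with $y\leq x+1$ cannot all be simultaneously injective in every nonzero linear combination; your use of Westwick's bound (Theorem~\ref{wt}) is precisely the general-$n$ version of that step, and is moreover the tool the paper itself invokes elsewhere (Lemmas~\ref{2n} and~\ref{limit}). So your route is the natural extension of the intended argument rather than a genuinely different one.
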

\begin{proof}

Actually, we can obtain a proof by following the arguments of Ringel mutatis mutandis (cf. \cite[Lemma 3.2]{Claus2}).

\end{proof}

\begin{Lemma}\label{y2n}
Suppose that $(x,y)\in \mathbf{F}$ is an elementary dimension vector. Then $y<2(n-1)$.

\end{Lemma}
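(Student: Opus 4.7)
The plan is to argue by contradiction and combine Lemmas \ref{regular}, \ref{fc} and \ref{vector}. Assume that $M \in \modd \cK_n$ is an elementary module with $\dimu M = (x,y) \in \mathbf{F}$ and $y \geq 2(n-1)$. Since $M$ is elementary it is in particular regular, so Lemma \ref{regular} will yield a contradiction as soon as I exhibit a proper nonzero submodule $U \subset M$ such that both $\dimu U$ and $\dimu M/U$ are regular dimension vectors.

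First I verify the hypotheses of Lemma \ref{fc}. From $(x,y) \in \mathbf{F}$ we have $y \leq x$, so $y \leq x + n - 2$, and from $y \geq 2(n-1)$ together with $n \geq 3$ we get $y \geq n-1$. Lemma \ref{fc} therefore produces a submodule $U \subseteq M$ with $\dimu U = (1, n-1)$. A direct computation gives
\[
q(1, n-1) = 1 + (n-1)^2 - n(n-1) = 2-n < 0
\]
for $n \geq 3$, so $\dimu U$ is a regular dimension vector.

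Next I apply Lemma \ref{vector}: the hypothesis $(x,y) \in \mathbf{F}$ with $y \geq 2(n-1)$ is exactly what is needed to conclude that $\dimu M/U = (x-1, y-(n-1))$ is regular as well. It remains to check that $U$ is a proper nonzero submodule of $M$. Nonzero-ness is clear from $\dimu U = (1, n-1) \neq (0,0)$. For properness, note that $x \geq y \geq 2(n-1) > 1$ for $n \geq 3$, so $\dimu U \neq \dimu M$, and hence $U \neq M$.

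All hypotheses of Lemma \ref{regular} are now met, so $M$ cannot be elementary, contradicting our assumption. The main obstacle is essentially absent here because Lemmas \ref{vector} and \ref{fc} have already packaged the two nontrivial steps, namely the arithmetic check that the quotient dimension vector lands in the regular region and the existence of a submodule of dimension vector $(1, n-1)$; the only care needed is to verify the numerical inequalities $y \geq n-1$ and $x > 1$ that validate the applications of these two lemmas.
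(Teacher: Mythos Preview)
Your proof is correct and considerably more direct than the paper's. The paper first invokes Lemma~\ref{less n} to dispose of the case $x-y\leq n-2$, then passes to the dual module with $\dimu M=(y,x)$ and carries out a lengthy analysis of the submodules generated by one and by two elements of $M_1$, showing they must have dimension vectors $(1,n)$ and $(2,2n)$ respectively, and finally derives the numerical contradiction $x\geq ny>\tfrac{n}{2}y$. You instead notice that Lemmas~\ref{fc} and~\ref{vector} apply to $M$ with $\dimu M=(x,y)$ directly: the submodule $U$ of dimension vector $(1,n-1)$ supplied by Lemma~\ref{fc} has both $\dimu U$ and $\dimu M/U=(x-1,y-(n-1))$ regular, so Lemma~\ref{regular} gives the contradiction in one stroke. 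Your route is shorter and makes transparent why Lemma~\ref{vector} was proved in the first place; the paper's longer argument, on the other hand, develops structural information about submodules generated by one or two elements that is reused almost verbatim in the proof of the next result, Lemma~\ref{l2n}.
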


\begin{proof}
By Lemma \ref{less n}, we only need to consider the case: $x-y\geq n-1$. Suppose that there exists an elementary module $M$ with  $\dimu M=(y,x)$ such that $y\geq 2(n-1)$ and $x-y\geq n-1,(x,y)\in \mathbf{F}$. Then we have 
\begin{equation}
\begin{cases}
y\geq 2(n-1),\\

x-y\geq n-1,\\

y \geq \frac{2}{n}x.

\end{cases}
\end{equation}

This implies 

\begin{equation}\label{xu}
\begin{cases}
y\geq 2(n-1),\\

  y+ n-1\leq x\leq \frac{n}{2}y,\\
  x\geq y+n-1 \geq 3(n-1).
\end{cases} 
\end{equation}

It follows that $y=y+(n-1)-(n-1)\leq x-(n-1)\leq \frac{n}{2}y-(n-1)$. Moreover, we have

\begin{center}
$\frac{y}{y-1}\leq \frac{x-(n-1)}{y-1}\leq \frac{\frac{n}{2}y-(n-1)}{y-1}$.

\end{center}
Since $f(\frac{y}{y-1})=-\frac{(n-2)y^2-(n-2)y-1}{(y-1)^2}<0$ and $f(\frac{\frac{n}{2}y-(n-1)}{y-1})=f(\frac{n}{2}-\frac{n-2}{2y-2})=(\frac{n-2}{2y-2})^2-\frac{n^2}{4}+1<0$, the dimension vector $(y-1,x-(n-1))$ is regular. Now we consider $(y-2,x-x'),n+1\leq x' \leq 2n-1$. Note that $y+n-1-x'\leq x-x'\leq \frac{n}{2}y-x'$ and $x'-(n-1)\leq n$. Then 
\begin{center}
$\frac{y+(n-1)-(2n-1)}{y-2}=\frac{y-n}{y-2}\leq  \frac{y+n-1-x'}{y-2}\leq \frac{x-x'}{y-2}\leq \frac{\frac{n}{2}y-x'}{y-2}\leq \frac{\frac{n}{2}y-(n+1)}{y-2}=\frac{n}{2}-\frac{1}{y-2}$. 

\end{center}
We have   $\frac{y-n}{y-2}=1-\frac{n-2}{y-2}\geq 1-\frac{n-2}{2(n-1)-2}=\frac{1}{2}$, and $f(\frac{1}{2})=\frac{5-2n}{4}<0,n\geq 3$. As $\frac{n}{2}-\frac{y-n}{y-2}\leq \frac{n}{2}-\frac{1}{2}$, we have $f(\frac{y-n}{y-2})\leq f(\frac{1}{2})<0$. On the other hand, $f(\frac{n}{2}-\frac{1}{y-2})=(\frac{1}{y-2})^2-\frac{n^2}{4}+1<0,y\geq 2(n-1)$. Hence $(y-2,x-x')$ is regular.

Let $U\subseteq M$ be the submodule generated by an arbitrary element $0\neq m\in M_1$. Then we have $\dimu U=(1,n)$.  Otherwise, suppose that $\dimu U=(1,x_U)$ and $x_U\leq n-1$. Since $M$ is indecomposable, we get $x_U\geq 1$, as $x_U=0$ will indicate that $M$ has $S(1)$ as a direct summand and $M$ is decomposable.  We now let $U''=U\oplus U'$, where $U'$ is a semi-simple module with dimension vector $(0,n-1-x_U)$. We can see that $U''\subseteq M$ is a  submodule of $M$ and $\dimu U''=(1,n-1)$. Since $(y-1,x-(n-1))$ is regular and \underline{dim} $U''$ is regular, it follows that   $M$ is not elementary, this is a contradiction to  Lemma \ref{regular}.

  We claim that for any two non-zero and linearly independent elements $m_1,m_2\in M_1$,  it generates a submodule $E\subseteq M$  with  $\dimu E=(2,x_{E})$ and $x_{E}\geq n+1$ . Otherwise, suppose that  there exist non-linear elements $m_1,m_2\in M_1$ such that the submodule $\tilde{E}$ generated by $m_1,m_2$ has dimension vector $(2,x_{\tilde{E}})$ and $x_{\tilde{E}}\leq n$.  We already know that for any submodule $U\subseteq M$ with dim$_kU_1\neq 0$, we have  $\dim_kU_2\geq n$. This implies $x_{\tilde{E}}= n$. Now Lemma \ref{fc} provides a submodule $E'\subseteq \tilde{E}\subseteq M$ of dimension vector $(1,n-1)$,  a contradiction. Hence  $x_{E}> n$.  We have seen that $(y-2,x-x')$ is regular for all $n+1\leq x'<2n$. Then $x_E=2n$. Note that $m_1,m_2$ are random elements of $M_1$.  Let  $\{e_1, \cdots,e_y\}$ be a basis of $M_1$. Let $U^i$ be the submodule generated by the single element  $e_i,i=1,\cdots,y$, where $y\geq 2(n-1)\geq 4$.  Comparing to the submodule $E$, we have  $U^i\cap U^j=(0)$ for any $i\neq j\in \{1,\cdots,y\}$. Then we can get $x\geq ny >\frac{n}{2}y$,  which is a contradiction to (\ref{xu}).  Finally, we can see that  there does not exist such elementary dimension $(y,x)$ with  $y\geq 2(n-1)$.  Hence $y<2(n-1)$.

\end{proof}

\begin{Lemma}\label{l2n}
Suppose that $(x,y)\in \mathbf{F}$ is an elementary dimension vector.  Then $x<2n$.
\end{Lemma}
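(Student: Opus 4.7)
I would assume for contradiction that $(x,y) \in \mathbf{F}$ is elementary with $x \geq 2n$. Two preparatory reductions: Lemma \ref{less n}, applied to $(x,y)$, forces $x \geq y + n - 1$ (else $x < n$, contradicting $x \geq 2n$), and Lemma \ref{y2n} gives $y \leq 2n - 3$. By the duality noted at the start of Section 3, the dimension vector $(y,x)$ is also elementary, and I fix $M \in \modd \cK_n$ with $\dimu M = (y,x)$, so that $M_1$ has dimension $y$ and $M_2$ has dimension $x$.

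The strategy is to mimic the three-step argument in the proof of Lemma \ref{y2n}. First, show that the submodule $\langle m \rangle$ generated by any nonzero $m \in M_1$ has $\dimu \langle m\rangle = (1, n)$: writing $\dimu \langle m \rangle = (1, d)$ with $d \leq n$, if $d \leq n-1$ one extends by $(n-1-d)$ copies of $S(2)$ to a submodule $U''$ with $\dimu U'' = (1, n-1)$, which is regular since $q(1, n-1) = 2-n < 0$. The quotient satisfies $q(y-1, x-(n-1)) = (y-1)^2 f(t)$ with $t = (x - n + 1)/(y-1)$, and the bounds $x \geq 2n$, $x \leq \tfrac{n}{2}y$, and $y \leq 2n-3$ place $t$ in an interval on which $f(t) < 0$ (checking both endpoints in the spirit of Lemma \ref{vector}), so the quotient is regular and Lemma \ref{regular} gives a contradiction; hence $d = n$.

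Second, show that for linearly independent $m_1, m_2 \in M_1$, the submodule $E = \langle m_1, m_2\rangle$ has $\dimu E = (2, 2n)$. The same argument as in Lemma \ref{y2n} (using Step 1 and Lemma \ref{fc}) gives $\dim E_2 \geq n+1$. For $d' = \dim E_2 \in \{n+1, \dots, 2n-1\}$, the submodule dimension vector $(2, d')$ is already regular (since $q(2, d') = (d'-n)^2 - (n^2-4) < 0$), and the quotient dimension vector $(y-2, x-d')$ is regular as well: with $t = (x-d')/(y-2)$, the key constraints $x \geq y + n - 1$ (from Lemma \ref{less n}), $x \leq \tfrac{n}{2}y$, and $y \leq 2n-3$ yield
\[
t \in \Bigl[\max\Bigl(\tfrac{y-n}{y-2},\ \tfrac{1}{y-2}\Bigr),\ \tfrac{n}{2} - \tfrac{1}{y-2}\Bigr],
\]
and one verifies $f(t) < 0$ at both endpoints for $n \geq 3$. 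Lemma \ref{regular} then rules out each $d' \in [n+1, 2n-1]$, forcing $d' = 2n$.

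Finally, a basis $e_1, \ldots, e_y$ of $M_1$ produces submodules $U^i = \langle e_i\rangle$, each of dimension vector $(1, n)$, which by Step 2 satisfy $\dim (U^i + U^j)_2 = 2n$ for $i \neq j$, so $U^i \cap U^j = 0$. Summing gives $\dim M_2 \geq ny$, but $(x,y) \in \mathbf{F}$ implies $x \leq \tfrac{n}{2}y < ny$, the required contradiction, and I conclude $x < 2n$. The principal technical obstacle is Step 2: unlike in Lemma \ref{y2n}, where $y \geq 2(n-1)$ supplied the crucial lower bound on $t$, here $y$ is small, and one must rely instead on $x \geq y + n - 1$ from Lemma \ref{less n} to secure the lower endpoint; the boundary regimes ($y$ near $n$ or $d'$ near $2n-1$) require careful case checking but go through for $n \geq 3$.
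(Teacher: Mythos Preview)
Your proposal is correct and follows essentially the same three-step architecture as the paper's proof: reduce via Lemma~\ref{less n} to the range $x\geq y+n-1$, invoke Lemma~\ref{y2n} to bound $y$, pass by duality to a module $M$ with $\dimu M=(y,x)$, and then force $\dim\langle m\rangle_2=n$ and $\dim\langle m_1,m_2\rangle_2=2n$ by checking regularity of the relevant quotient dimension vectors and applying Lemma~\ref{regular}, concluding with the basis argument that yields $x\geq ny>\tfrac{n}{2}y$. The only differences are cosmetic: in Step~2 the paper obtains the lower bound $t\geq\tfrac{1}{n-1}$ via the chain $\tfrac{x-(2n-1)}{y-2}\geq\tfrac{x-(2n-1)}{x-(n+1)}\geq\tfrac{1}{n-1}$ (using $y-2\leq x-(n+1)$ and $x\geq 2n$), whereas you package the same information as $t\geq\max\bigl(\tfrac{y-n}{y-2},\tfrac{1}{y-2}\bigr)$; both minorants are $\geq\tfrac{1}{n-1}$, so the endpoint check $f(\tfrac{1}{n-1})=\tfrac{2-n}{(n-1)^2}<0$ closes the argument in either formulation.
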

\begin{proof}
By Lemma \ref{less n}, we only need to show that $(x,y)$ is not elementary when $x-y\geq n-1$, that is, $y\leq x-(n-1)$, where $x\geq 2n$. According to Lemma \ref{y2n}, we know that $\frac{2}{n}x\leq  y<2(n-1)$.  Now we consider the  dimension vector $(y,x)$.  Suppose that there exists an elementary module $M$ with  $\dimu M=(y,x)$, where $4=\frac{2}{2n}\times 2n\leq y\leq x-(n-1), x\geq 2n$.  We get 
\begin{equation}\label{iq}
\begin{cases}
2n\leq x\leq  \frac{n}{2}y ,\\

\frac{2}{n}x\leq y\leq x-(n-1),\\
y<2(n-1).
\end{cases} 
\end{equation}
Then $2n\leq x< n(n-1)$ and 
\begin{equation}
\begin{cases}
n+1\leq x-(n-1)< (n-1)^2, \\
\frac{2}{n}x-1\leq y-1,\\
y<2(n-1),\\
y\leq x-(n-1).
\end{cases}    
\end{equation}

We have

\begin{center}
 $\frac{y}{y-1}\leq \frac{x-(n-1)}{y-1}=t\leq \frac{x-(n-1)}{\frac{2}{n}x-1}=\frac{n}{2}(\frac{x-(n-1)}{x-\frac{n}{2}})=\frac{n}{2}(1-\frac{n-2}{2x-n})<\frac{n}{2}(1-\frac{n-2}{2n(n-1)-n})=\frac{n}{2}-\frac{n-2}{4n-6}$.
\end{center}
Note that $f(\frac{y}{y-1})=-\frac{(n-2)y^2-(n-2)y-1}{(y-1)^2}<0,4\leq y<2(n-1)$, and   $f(\frac{n}{2}-\frac{n-2}{4n-6})=(\frac{n-2}{4n-6})^2-\frac{n^2}{4}+1<1-\frac{n^2}{4}+1<0, n\geq 4$.  Then $f(t)<0,t\in [\frac{y}{y-1},\frac{n}{2}-\frac{n-2}{4n-6})$. Hence $(y-1,x-(n-1))$ is regular.

 Let $n+1\leq x'\leq 2n-1<x$.  Consider $(y-2,x-x')$. Then 
 \begin{center}
 $\frac{1}{n-1}=1-\frac{n-2}{2n-(n+1)}\leq 1-\frac{n-2}{x-(n+1)}=\frac{x-(2n-1)}{x-(n-1)-2}\leq \frac{x-(2n-1)}{y-2}\leq \frac{x-x'}{y-2}\leq \frac{x-(n+1)}{y-2}\leq \frac{x-(n+1)}{\frac{n}{2}x-2}$.
 \end{center}
Moreover,  $\frac{x-(n+1)}{\frac{n}{2}x-2}=\frac{n}{2}(\frac{x-(n+1)}{x-n})=\frac{n}{2}(1-\frac{1}{x-n})\leq \frac{n}{2}(1-\frac{1}{n(n-1)-n})=\frac{n}{2}-\frac{1}{2(n-2)},2n\leq x<n(n-1)$. 
On the other hand,  $f(\frac{1}{n-1})=\frac{2-n}{(n-1)^2}<0$ and $f(\frac{n}{2}-\frac{1}{2(n-2)})=(\frac{1}{2(n-2)})^2-\frac{n^2}{4}+1<0,n\geq 4$. Hence $(y-2,x-x')$ is regular. 

Let $U\subseteq M$ be a submodule with $\dimu U=(y_U,x_U)$. When $y_U=1$,  we get  $x_U=n$ since $(y-1,x-(n-1))$ is regular. Otherwise, suppose that $x_U\leq n-1$. We write $W=U\oplus U''$, where  $U''$ is a semi-simple module with  $\dimu U''=(0,n-1-x_U)$. Then  $\dimu W=(1,n-1)$ is regular and  $\dimu  M/W=(y-1,x-(n-1))$ is regular. Hence $M$ is not elementary by Lemma \ref{regular}. Suppose that $y_U=2$.  We claim that for any two non-zero and linearly independent elements $m_1,m_2\in U_1$,  it generates a submodule $U'\subseteq U$  with  $\dimu U'=(2,x_{U'})$ and $x_{U'}\geq n+1$. Otherwise, suppose that  there exist  elements $m_1,m_2\in U_1$ such that the submodule $\tilde{U}$ generated by $m_1,m_2$ has dimension vector $(2,x_{\tilde{U}})$  and $x_{\tilde{U}}\leq n$.  We  know that for any submodule $U\subseteq M$ with $\dim _kU_1\neq 0$,  we have $\dim_k U_2\geq n$. Hence $x_{\tilde{U}}= n$.  Now Lemma \ref{fc} provides a submodule $\bar{U}\subseteq \tilde{U}\subseteq M$ of dimension vector $(1,n-1)$, a contradiction.  We have seen that $(y-2,x-x_U)$  is regular when $n+1\leq x_U<x$. Moreover, $(2,x_U)$ is regular when $n+1\leq x_U<2n$. Hence $x_U=2n$.  This indicates that any two non-zero and linearly independent elements $m_1,m_2\in M_1$  generate a submodule $U'\subseteq M$ with  $\dimu U'=(2,2n)$. Let $\{e_1,\cdots,e_y\}$ be a basis of $M_1$. Let  $U^i$ be the submodule generated by the single element $e_i, i\in \{1,\cdots,y\}$. Then $U^i\cap U^j=(0)$.  Hence $x\geq ny >\frac{n}{2}y$, which is a contradiction to (\ref{iq}). Hence such an elementary module $M$ does not exist, this yields   $x<2n$.

 Suppose that we can find an elementary module $M$ with  $\dimu M=(y,x)$ when $x<2n$, where $(x,y)\in \mathbf{F}$. If we let $U$ be the submodule of $M$ generated by two non-zero and linearly independent elements $m_1,m_2\in M_1$,  then we have  $\dimu U=(2,x)$ according to the above discussion.

\end{proof}

\begin{corollary}\label{x2n}
Let $(x,y)\in \mathbf{F}$, $n<x<2n$. 
Let $M\in\modd \cK_n$ be a module with dimension vector $(y,x)$. Then $M$ is an elementary module if and only if the following two conditions hold.
\begin{enumerate}
\item Any non-zero element $m_1\in M_1$  generates a submodule $U'$ with dimension vector $(1,n)$, i.e.  $U'\cong P(1)$.

\item   Any two non-zero and linearly independent elements $m_1,m_2\in M_1$  generate a submodule $U''$ with dimension vector $(2,x)$. 

\end{enumerate}

\end{corollary}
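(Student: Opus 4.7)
The plan is to prove both directions, with the forward direction following the scheme developed in the proofs of Lemmas \ref{y2n} and \ref{l2n} restricted to the sub-range $n < x < 2n$, and the backward direction proceeding by a direct structural analysis using only conditions (1) and (2) together with the hereditary nature of $\cK_n$.

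For the forward direction, assume $M$ is elementary and aim to verify (1) and (2). To prove (1), I would fix $0 \neq m_1 \in M_1$ and consider the submodule $U' = \langle m_1 \rangle$ of dimension vector $(1, x_{U'})$ with $x_{U'} \leq n$. If $x_{U'} < n$, I would enlarge $U'$ to a submodule $W = U' \oplus S(2)^{n - 1 - x_{U'}}$ of dimension $(1, n-1)$; the latter has regular dimension vector since $q(1, n-1) = 2 - n < 0$. A direct computation in the spirit of Lemma \ref{vector} (writing $q(y-1, x - n + 1) = (x - n + 1)^2 f(t)$ with $t = (y-1)/(x - n + 1)$ and bounding $t$ using $(x, y) \in \mathbf{F}$ and $n < x < 2n$) would show $(y - 1, x - n + 1)$ is also regular, so Lemma \ref{regular} contradicts elementariness, forcing $x_{U'} = n$. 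For (2), let $U'' = \langle m_1, m_2 \rangle$ have dimension vector $(2, x_{U''})$ with $n \leq x_{U''} \leq x$ (the lower bound from (1)). If $x_{U''} = n$, Lemma \ref{fc} applied to $U''$ yields a submodule of dimension $(1, n - 1)$, contradicting (1). If $n < x_{U''} < x$, routine quadratic checks show both $(2, x_{U''})$ (since the roots of $q(2, \cdot)$ are $n \pm \sqrt{n^2 - 4}$, which bracket $\{n+1, \ldots, 2n-1\}$) and $(y - 2, x - x_{U''})$ are regular, whence Lemma \ref{regular} again yields a contradiction. Hence $x_{U''} = x$.

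For the reverse direction, assume (1) and (2) and show $M$ is elementary. First I would show $M$ is indecomposable: if $M = A \oplus B$ nontrivially with both $A_1, B_1$ nonzero, picking $m_1 \in A_1$, $m_2 \in B_1$ gives $\langle m_1, m_2 \rangle = \langle m_1 \rangle \oplus \langle m_2 \rangle$ of dimension $(2, 2n)$ by (1), contradicting (2) since $x < 2n$; otherwise $B_1 = 0$ makes $B$ a sum of copies of $S(2)$ with $\dim A_2 < x$, and any pair of linearly independent $m_1, m_2 \in M_1 = A_1$ (available because $y \geq 3$) contradicts (2) via the containment $\langle m_1, m_2 \rangle \subseteq A$. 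Next, $M$ is regular: the dimension sequences of preprojective indecomposables $(0, 1), (1, n), (n, n^2 - 1), \ldots$ and of preinjective indecomposables $(1, 0), (n, 1), (n^2 - 1, n), \ldots$ do not meet $\{(y, x) : y \geq 3,\ n < x < 2n\}$. Finally, suppose an exact sequence $0 \to L \to M \to N \to 0$ with $L, N$ nonzero regular, and split into cases on $y_L = \dim L_1$. The case $y_L = 0$ forces $L \cong S(2)^{x_L}$; the case $y_L = 1$ uses (1) together with $\Ext^1(S(2), P(1)) = 0$ (from projectivity of $P(2) = S(2)$) to split $L$ as $P(1) \oplus S(2)^{x_L - n}$; the case $y_L \geq 2$ uses (2) applied to two linearly independent elements of $L_1$ to force $x_L = x$, so that $N \cong S(1)^{y - y_L}$. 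In each case $L$ or $N$ acquires a preprojective or preinjective summand, contradicting regularity, and so $M$ is elementary.

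The main obstacle is the regularity verification for $(y - 2, x - x_{U''})$ in the forward direction (2): in the restricted range $n < x < 2n$ with $y$ possibly as large as $x$, the second coordinate $x - x_{U''}$ may become small and push the pair near the boundary of the regular region, so careful case analysis on the quadratic $f(t) = t^2 - nt + 1$ is needed, with a separate structural argument in any exceptional small-parameter instances where the quadratic bound is tight. The backward direction is considerably cleaner and depends only on (1), (2), the hereditariness of $\cK_n$, and the explicit dimension sequences of its preprojective and preinjective indecomposables.
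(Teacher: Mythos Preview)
Your proposal is correct on both directions.

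For the forward implication you proceed exactly as the paper does: it simply writes ``According to the proof of Lemma~\ref{l2n}, we get $(a)$, $(b)$,'' so your explicit unpacking of that argument---together with your honest flag that the regularity of $(y-2,x-x_{U''})$ in the sub-range $n<x<2n$ requires its own quadratic check rather than being literally covered by the inequalities derived under the hypothesis $x\geq 2n$ in Lemma~\ref{l2n}---is the same strategy, just written out.

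For the reverse implication your route differs from the paper's. The paper argues indecomposability from condition~(a) alone (if $M=M'\oplus M''$ with $n<x<2n$, one summand has second component of dimension $<n$, so an element of its first component violates~(a)); observes that $M$ is regular because $(x,y)\in\mathbf F$ forces $q(y,x)<0$; and then invokes \cite[Appendix~1, Proposition]{Claus2} as a black box to conclude that $M$ is elementary. You instead give a self-contained argument: your indecomposability proof uses both (a) and (b) and is in fact more careful than the paper's, since you explicitly treat the case where one summand has trivial first component, which the paper's one-line argument does not address; your regularity check via the explicit preprojective and preinjective dimension sequences is equivalent to the paper's implicit $q<0$ reasoning; and your case split on $y_L=\dim_k L_1$, using projectivity of $S(2)$ and of $P(1)$, replaces the external citation by a direct verification that any regular filtration forces a preprojective or preinjective summand in $L$ or $N$. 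The payoff of your approach is independence from Ringel's unpublished note, at the cost of a few more lines; the paper's version is shorter but less transparent.
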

\begin{proof}
Suppose that $M$ is an elementary module. According to the   proof of Lemma  \ref{l2n}, we get  $(a), (b)$.

Suppose that  $M_1$ satisfies conditions $(a), (b)$. We first show that  $M$ is indecomposable. Otherwise, suppose that $M=M'\oplus M''$.  Since $n<x<2n$,  there exists some submodule $X\in\{M',M''\}$ such that for any $0\neq m\in X_1$, it generates a submodule $U$ with dimension vector $(1,x_U)$ and $x_U<n$,  this contradicts $(a)$. Since $(x,y)\in \mathbf{F}$  and  $\dimu M=(y,x)$, $ y\geq \frac{2}{n}x>2$, module $M$ is regular. Finally,  $M$ is elementary by \cite[Appendix 1. Proposition]{Claus2}.

\end{proof}

\begin{Lemma}\label{2n}
Suppose that  $(x,y)\in \mathbf{F}$ is an elementary dimension vector, where $n < x<2n$. Then 
 $y^2-y+2x\leq 4n+2$.

\end{Lemma}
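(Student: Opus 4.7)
By the duality remark following Lemma \ref{less n} and by Corollary \ref{x2n}, our hypothesis gives an elementary module $M$ with $\dimu M = (y,x)$ which satisfies: $(a)$ every nonzero $m \in M_1$ generates a submodule $\langle m\rangle \cong P(1)$ of dimension vector $(1,n)$; and $(b)$ any two linearly independent $m_1, m_2 \in M_1$ generate a submodule of dimension vector $(2, x)$. For nonzero $m \in M_1$ I set $W_m := \sum_{i=1}^n k\gamma_i m \subseteq M_2$; condition $(a)$ gives $\dim W_m = n$, and since $\dim M_2 = x$, condition $(b)$ forces $W_{m_1} + W_{m_2} = M_2$ and hence $\dim(W_{m_1}\cap W_{m_2}) = 2n - x$ for any linearly independent pair. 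The claim $y^2 - y + 2x \leq 4n + 2$ rewrites as $\binom{y}{2} \leq 2n + 1 - x$, making explicit that the $\binom{y}{2}$ pairwise intersections of the $W_m$'s are the natural objects to count.

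The plan is to analyse the minimal projective resolution $0 \to S(2)^{yn-x} \to P(1)^y \to M \to 0$, whose minimality follows because $M$ is generated by $M_1$ (as $M$ can have no $S(2)$-summand by indecomposability). Identifying the syzygy $K \subseteq (V^n)^y = \Soc(P(1)^y)$ of dimension $yn - x$, and fixing a basis $e_1, \ldots, e_y$ of $M_1$, for any $S \subseteq \{1, \ldots, y\}$ I let $V^n_S \subseteq (V^n)^y$ denote the subspace supported on coordinates in $S$ and put $K_S := K \cap V^n_S$. Then $K_S$ is exactly the kernel of the evaluation map $V^n_S \to M_2$, $(v_i)_{i\in S} \mapsto \sum_{i\in S}\sum_j (v_i)_j \gamma_j e_i$, and conditions $(a)$ and $(b)$ translate to $K_{\{i\}} = 0$ and $\dim K_{\{i,j\}} = 2n - x$. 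The intersection of any two distinct pair-subspaces $V^n_{\{i,j\}}$ and $V^n_{\{k,\ell\}}$ is a single-coordinate subspace or zero, all of which are killed inside $K$, so the pair-kernels $K_{\{i,j\}}$ form an internal direct sum in $K$; this yields the preliminary bound $\binom{y}{2}(2n - x) \leq \dim K = yn - x$.

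This preliminary bound produces the claimed inequality in a range of parameters but is too weak when $x$ is close to $2n$, so I would attempt to sharpen it by extracting further relations. Three complementary strategies suggest themselves: (i) use the triple-kernels $K_{\{i,j,k\}}$, of dimension $3n - x$, to force a dependency of dimension $\geq \max(0, 3n-2x)$ among the three pair-kernels $K_{\{i,j\}}, K_{\{i,k\}}, K_{\{j,k\}}$ sitting inside it, and combine these triple-defects across all $\binom{y}{3}$ triples; (ii) invoke condition $(b)$ for non-basis pairs $\alpha e_i + \beta e_j$, which produces further pair-kernels not supported on coordinate subsets and so cuts further into the $(yn-x)$-dimensional room inside $K$; (iii) use the indecomposability of $M$ to rule out configurations that would otherwise be geometrically possible. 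The main obstacle will be the delicate combinatorial bookkeeping required to combine these contributions without double-counting, and in particular to close the gap in the regime $x > 3n/2$, where the triple-defect in (i) is vacuous; there one will likely have to bring in a Kronecker pencil normal-form analysis of the 2-dimensional subspaces of the image of the injection $M_1 \hookrightarrow \Hom(\Lambda_n, M_2)$ sending $m$ to $v \mapsto \sum_j v_j \gamma_j m$, which forces each such pencil to decompose into exactly $x-n$ Kronecker row blocks $L_k^T$, and then use this rigidity to obtain the sharper inequality $\binom{y}{2} \leq 2n+1-x$.
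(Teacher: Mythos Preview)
Your proposal has a genuine gap and is, by your own admission, incomplete. The claim that the pair-kernels $K_{\{i,j\}}$ form an internal direct sum in $K$ rests on the fallacy that pairwise trivial intersection implies independence for three or more subspaces. Concretely, take any nonzero $w\in W_{e_1}\cap W_{e_2}\cap W_{e_3}$ (forced to exist when $x<3n/2$, since each $W_{e_i}$ has codimension $x-n$ in $M_2$); by condition $(a)$ there are unique nonzero $u_i\in V^n$ with $w=\sum_j (u_i)_j\gamma_j e_i$, and then $(u_1,-u_2,0)\in K_{\{1,2\}}$, $(-u_1,0,u_3)\in K_{\{1,3\}}$, $(0,u_2,-u_3)\in K_{\{2,3\}}$ are nonzero and sum to zero. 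So the ``preliminary bound'' $\binom{y}{2}(2n-x)\le yn-x$ is not established (and would in fact fail, e.g.\ for $y=3$, $x=n+1$ it gives $n\le 2$). Beyond this, the three sharpening strategies are only sketched; strategy (iii), the Kronecker-pencil rigidity of $2$-planes in $\Hom(\Lambda_n,M_2)$, is the right instinct but is not carried out.

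The paper's argument is much shorter and uses no syzygy analysis. For each unordered pair $\{i,j\}$ of basis vectors of $M_1$ it forms the $2n\times x$ matrix $A_{(i,j)}$ with rows $\gamma_1 m_i,\ldots,\gamma_n m_i,\gamma_1 m_j,\ldots,\gamma_n m_j$; Corollary~\ref{x2n}$(b)$ gives $\rk A_{(i,j)}=x$. It argues (via condition $(a)$) that these $\binom{y}{2}$ matrices are linearly independent, and then applies Westwick's theorem on linear spaces of matrices of constant rank (Theorem~\ref{wt}): any subspace of $L(V^x,V^{2n})$ in which every nonzero element has rank $x$ has dimension at most $l(x,x,2n)=2n-x+1$, whence $\binom{y}{2}\le 2n+1-x$. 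The external ingredient you are missing is precisely this Westwick bound; it encapsulates in one stroke the pencil rigidity you were groping towards in strategy (iii).
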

\begin{proof}
Suppose that $M$ is an elementary module with  $\dimu M=(y,x),2=\frac{2}{n}\times n< y \leq x$.  Let   $I=\{m_1,\cdots, m_y\}$ be a basis of $M_1$, and let  $I'=\{m'_1, \cdots, m'_x\}$ be a basis of $M_2$, respectively.  Let $m_i,m_j\in I,i\neq j$. For each $\gamma_i\in \Lambda_n$, there exist some $a^j_{is}\in k$ such that
\begin{center}
$\gamma_i.m_j=( a^j_{i1} , \cdots , a^j_{ix}),$
\end{center} 
$1\leq i\leq n, 1\leq s\leq x, j\in\{1, \cdots,y\} $.  We define a matrix:
\begin{center}

$A_{(i,j)}:= \begin{bmatrix}
\gamma_1.m_i \\
\vdots\\
\gamma_n.m_i\\
\gamma_1.m_j\\
\vdots\\
\gamma_n.m_j
\end{bmatrix}=\begin{bmatrix}
a^i_{11} & \cdots & a^i_{1x}\\ 
\vdots & \ddots & \vdots \\
a^i_{n1} & \cdots & a^i_{nx}\\
  a^j_{11} & \cdots & a^j_{1x}\\
 \vdots & \ddots & \vdots \\
 a^j_{n1} & \cdots & a^j_{nx}
\end{bmatrix}.$

\end{center}
It indicates that the rank of the matrix $A_{(i,j)}$ is independent of the choices of two elements $m_i,m_j$ by Corollay \ref{x2n}. Hence    $\rk A_{(i,j)} \equiv x$. On the other hand, the matrix $A_{(i,j)}$ can also be seen as  a linear transformation from the vector space $V^{x}$ to the vector space $V^{2n}$. Since the    cardinality of set $\{(m_i,m_j)\mid m_i, m_j\in I,i\neq j\} $ is $\binom{y}{2}=\frac{y(y-1)}{2}$,  the dimension of linear space $W$ generated by all such matrices $\{A_{(i,j)}\mid m_i\neq m_j\in I \}$ is  $\frac{y(y-1)}{2}$.  Otherwise, suppose that $\dim_k W<\frac{y(y-1)}{2}$. Then there exist some $ b_{i,j}\in k\setminus\{0\}$ such that $\sum_{i,j}b_{i,j} A_{(i,j)}=0$. That is, 
\begin{center}
$\sum_{i,j}b_{i,j} A_{(i,j)}=\begin{bmatrix}
\gamma_1.(\sum b_{i,j} m_i) \\
\vdots\\
\gamma_n.(\sum b_{i,j}m_i)\\
\gamma_1.(\sum b_{i,j}m_j)\\
\vdots\\
\gamma_n.(\sum b_{i,j} m_j)
\end{bmatrix}=\begin{bmatrix}
0 \\
\vdots \\
0 \\
\vdots\\

0
\end{bmatrix}.$
\end{center}
Then we can find  an element $0\neq m=\sum b_{i,j} m_i$ such that $\gamma_i.m = 0$ for all $1\leq i \leq n$, which  contradicts  Corollary \ref{x2n}$(a)$.

Finally, we have $2n-x+1\leq l(x,x,2n)\leq 2n+x-2x+1=2n-x+1,$ that is, $l(x,x,2n)=2n+1-x$. By Theorem \ref{wt}, we know that $ \frac{y(y-1)}{2}\leq 2n+1-x$, which means $y^2-y+2x\leq 4n+2$.

\end{proof}

\begin{Lemma}\label{de}
Let $M\in\modd \cK_n$ with dimension vector $(x,y),x\geq 1$. Suppose that  any non-zero element $m\in M_1$ generates a submodule $U$ with dimension vector $(1,y)$. Then $M$ is indecomposable and $y\leq n$.  
\end{Lemma}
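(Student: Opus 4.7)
The plan is to extract both conclusions directly from the structural observation that, for any $0\neq m\in M_1$, the submodule of $M$ generated by $m$ has first component $km$ and second component $\langle \gamma_1.m,\ldots,\gamma_n.m\rangle\subseteq M_2$. The hypothesis forces this submodule to have dimension vector $(1,y)$, so its second component already exhausts $M_2$. Since $M_2$ is then spanned by the $n$ vectors $\gamma_i.m$, we immediately obtain $y=\dim_k M_2\leq n$, which takes care of the second claim.

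For indecomposability I would argue by contradiction. Suppose $M=M'\oplus M''$ with $M',M''$ both non-zero, and write $\dimu M'=(x',y')$ and $\dimu M''=(x'',y'')$. Since $x\geq 1$, one of $x',x''$ is positive; say $x'\geq 1$. Choose $0\neq m\in M'_1$; the submodule of $M$ generated by $m$ lies inside $M'$, so its second-coordinate dimension is at most $y'$. The hypothesis then yields $y\leq y'$, whence $y''=0$ and $M''\cong S(1)^{x''}$ is a sum of copies of the simple $S(1)$. If $x''\geq 1$, picking $0\neq m'\in M''_1$ gives $\gamma_i.m'=0$ for every $i$ (because $M''_2=0$), so $m'$ generates a submodule of dimension vector $(1,0)$; comparison with the hypothesis forces $y=0$. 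Hence, assuming $y\geq 1$, we must have $x''=0$ as well, giving $M''=0$ and a contradiction.

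The argument is purely linear-algebraic and presents no real obstacle; the only subtlety is the degenerate case $y=0$, in which the hypothesis holds vacuously. Then $M\cong S(1)^{x}$, so the bound $y\leq n$ remains trivially true, but the indecomposability assertion is only informative under the implicit assumption $y\geq 1$, which is the situation in which the lemma is subsequently applied. I would therefore organise the write-up as: (i) the one-line derivation of $y\leq n$ from the spanning observation, and (ii) the two-step splitting-by-contradiction argument above for indecomposability.
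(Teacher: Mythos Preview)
Your argument is correct and follows essentially the same route as the paper's proof: split a putative decomposition $M=M'\oplus M''$, apply the hypothesis to an element of $M'_1$ to force $y'=y$ (hence $M''_2=0$), and then apply it to an element of $M''_1$ to reach a contradiction; the bound $y\leq n$ comes in both cases from $\dim_k\Lambda_n=n$. Your write-up is in fact slightly more careful than the paper's, since you flag the degenerate case $y=0$, in which the hypothesis is satisfied by $M\cong S(1)^{x}$ and indecomposability fails for $x\geq 2$; the paper tacitly assumes $y\geq 1$, which is indeed the only case used later (where $(x,y)\in\mathbf{F}$).
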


\begin{proof}
When $x=1$, we are done. Now let  $x> 1$.    Suppose that $M$ is decomposable. We write $M=M'\oplus M''$. Assume that  $\dimu M'=(x',y')$. Any $0\neq m'\in M'_1$ generates a submodule $U'$ with dimension vector $(1,y)$.  We have $y'=y$. Hence $x'<x$, and $\dimu M''=(x-x',0)$. This cannot happen since an element $0 \neq m''\in M''_1 $ also generates a submodule $U''$ with dimension vector $(1,y)$.  Since $\dim _k \Lambda_n=n$,  we have $y\leq n$.

\end{proof}

\begin{Lemma}\label{mlessn}

Let $M\in\modd \mathcal{K}_n$ be a module with $\dimu M=(x,y)\in \mathbf{F}$ and $x \leq n-1.$ Let $0\neq m\in M_1$ be an arbitrary element.   Then $M$ is elementary if and only if the submodule $U$ generated by $m$ has dimension vector $(1,y)$.

\end{Lemma}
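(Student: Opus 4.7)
I would prove the two implications separately. The main tool is Lemma~\ref{regular}, combined with the observation that for $n \geq 3$ and any integer pair $(a,b)$ with $1 \leq a, b \leq n-1$,
\begin{equation*}
q(a, b) = ab\Bigl(\tfrac{a}{b} + \tfrac{b}{a} - n\Bigr) \leq ab\Bigl((n-1) + \tfrac{1}{n-1} - n\Bigr) = -\tfrac{(n-2)ab}{n-1} < 0,
\end{equation*}
so every such dimension vector is automatically regular. The hypothesis $x \leq n-1$ is exactly what forces every auxiliary dimension vector arising in the proof to sit inside this box.

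\noindent\textbf{Forward direction.} Suppose $M$ is elementary, pick an arbitrary $0 \neq m \in M_1$, and let $U = \langle m \rangle$ with $\dimu U = (1, z)$. Regularity of $M$ rules out $S(1)$ as a summand, so $z \geq 1$; and trivially $z \leq y \leq n-1$. Assume for contradiction that $z < y$; then $1 \leq z \leq n-2$, so $\dimu U$ is regular by the estimate above. If moreover $x \geq 2$, the quotient has $\dimu M/U = (x-1, y-z)$ with both coordinates in $[1, n-2]$, hence regular, and $U$ is a proper non-zero submodule, so Lemma~\ref{regular} contradicts elementarity of $M$. The edge case $x = 1$ requires separate treatment: there $U_1 = M_1$ and $\dimu M/U = (0, y-z)$ with $y-z \geq 1$, so $M/U$ is a non-zero direct sum of copies of $S(2) = P(2)$. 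Projectivity of $P(2)$ forces the surjection $M \to M/U$ to split, so $M$ is decomposable, contradicting the fact that elementary modules are indecomposable.

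\noindent\textbf{Backward direction.} Assume every $0 \neq m \in M_1$ generates a submodule of dimension vector $(1, y)$. Lemma~\ref{de} yields that $M$ is indecomposable and $y \leq n$, and combining with $(x, y) \in \mathbf{F}$ and $x \leq n-1$ gives $1 \leq y \leq x \leq n-1$. The indecomposable preprojective dimension vectors for $K(n)$ are $(0,1), (1,n), (n, n^2-1), \dots$, and the preinjective ones are $(1,0), (n,1), (n^2-1, n), \dots$; none lies in the box $[1, n-1]^2$, so $M$ must be regular. To establish elementarity, take any short exact sequence $0 \to L \to M \to N \to 0$ with $L, N$ both non-zero, and write $\dimu L = (a, b)$. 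If $a = 0$, then $L$ is a non-zero direct sum of $S(2) = P(2)$, hence preprojective and not regular. If $a \geq 1$, choose $0 \neq m \in L_1$; by hypothesis the submodule $\langle m \rangle \subseteq L$ has $\dimu = (1, y)$ in $M$, forcing $L_2 = M_2$, so $\dimu N = (x-a, 0)$ is a non-zero sum of $S(1) = I(1)$, preinjective and not regular. In either case $(L, N)$ cannot violate elementarity, and the proof is complete.

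\noindent\textbf{Main obstacle.} The main friction is the edge case $x = 1$ in the forward direction, where $\dimu M/U$ has a zero coordinate and escapes the automatic-regularity box; recognising that $S(2) = P(2)$ is projective (not injective) is what makes the sequence split and allows the argument to close without invoking Lemma~\ref{regular}.
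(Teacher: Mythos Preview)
Your proof is correct and follows essentially the same route as the paper: in the forward direction you contradict Lemma~\ref{regular} after checking that both $\dimu U$ and $\dimu M/U$ are regular (your estimate $q(a,b)=ab(a/b+b/a-n)<0$ for $1\le a,b\le n-1$ is a clean variant of the paper's direct computation), and in the backward direction you use Lemma~\ref{de} for indecomposability and then verify elementarity from the definition (the paper instead invokes \cite[Appendix~1, Proposition]{Claus2} for that last step, so your argument is slightly more self-contained). One small remark: the edge case $x=1$ that you flag as the ``main obstacle'' is actually vacuous, since $(1,y)\in\mathbf{F}$ forces $y\le 1$, hence $y=1$, and then $1\le z\le y$ already gives $z=y$; so the splitting argument via $S(2)=P(2)$, while correct, is never needed.
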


\begin{proof}
   Suppose that  $\dimu U=(1,y)$ for any $0\neq m\in M_1$.  Then  $U$ is a regular module by \cite[Lemma 3.5]{Daniel}. By Lemma \ref{de}  and  $1\leq y \leq x \leq n-1$, the module $M$ is regular indecomposable. Let $U'$ be a proper submodule of $M$ with $\dimu U'= (x',y')$. Then $x'\geq 1$. Note that $x'=0$ will indicate that $M$ is decomposable.  Hence $y'=y$ and   $\dimu U'$ is regular. Moreover,  $\dimu M/U'=(x-x', 0)$ is preinjective.  Hence $M$ is elementary by \cite[Appendix 1. Proposition]{Claus2}.

Conversely, let $M$ be an elementary module. Suppose that there exists some $0\neq m\in M_1$ generating a submodule $U$ with  $\dimu U=(1, y')$ and $1\leq y'<y.$  Note that $y'=0$ will indicate that $U$ is isomorphic to the simple module $S(1)$ and $M$ is decomposable. Then  $U$ is regular.  Now  consider  the factor module $M/U$ and its dimension vector $(x-1,y-y')$. We have
 \begin{equation}
 \begin{split}
q(x-1,y-y')& =(x-1)^2+(y-y')^2-n(x-1)(y-y')\\&  =(x-1)[(x-1)-n(y-y')]+(y-y')^2<0.
\end{split}
 \end{equation}
Since $x-1\geq y-y'$ and $n(y-y')-(x-1)>y-y'$ when $y\leq x \leq n-1,$  the dimension vector  $(x-1, y-y')$ is  regular,  which contradicts Lemma \ref{regular}.
\end{proof}

Let$(x,y)\in \mathbf{F}$, where  $x<n$.  Suppose that $M\in \modd \cK_n$ is an elementary module with dimension vector  $(x,y)$.  Lemma \ref{mlessn} tells us that $M$ can be seen as  a representation of the quiver

\begin{center}
\begin{tikzpicture}
\node (00) at (0,0) {$\circ$};
\node (08) at (-1,0) {$x \text{ points }$};
\node (25) at (1.5,0) {$\cdots$};

\node (30) at (3,0) {$\circ$};
\node (60) at (6,0) {$\circ$};

\node (35) at (4.5,0) {$\cdots$};

\node (0-3) at (0,-3) {$\circ$};
\node (0-9) at (-1,-3) {$y\text{ points }$};
\node (25) at (1.5,-3) {$\cdots$};
\node (3-3) at (3,-3) {$\circ$};

\node (35) at (4.5,-3) {$\cdots$};
\node (6-3) at (6,-3) {$\circ$};

\path [->] (00) edge (0-3)
           (00) edge(3-3)
           (00) edge(6-3)
           ;
\path[->]          (30) edge (0-3)      
         (30) edge  (3-3)                    
           (30) edge  (6-3)
                   ;
             
\path[->] (60) edge (0-3)  
   (60) edge (3-3)             
   (60) edge (6-3) ;                        
\end{tikzpicture}
\end{center}
where each  point of upper row has $y$  arrows. For the elementary module $M$, we just put the $1$-dimensional vector space $k$ at each point. We will give a precise construction later. 

Let $M\in\modd \cK_{n}$ be a module. We introduce an arrow $\gamma_{n+1}$ and define $\gamma_{n+1}.m:=0$ for each $m\in M$. Let $\Lambda_{n+1}=\Lambda_n\oplus k\gamma_{n+1}$, where $k\gamma_{n+1}$ is  1-dimensional vector space generated by the arrow $\gamma_{n+1}$.   Hence we have an embedding $\iota: \modd \cK_{n}\rightarrow \modd \cK_{n+1}$ in this way.  Then  $M$ can be seen as an object of  $\modd \cK_{n+1}$. In the following, we always mean $\gamma_{n+1}.M=0$ when we say $M\in \modd\cK_n\subseteq \modd \cK_{n+1}$.

Let $\mathbf{F}$ be the fundamental domain of $\cK_n$, and let $(x,y)\in \mathbf{F}$ with $x<n$. We have $\frac{2}{n}x\leq y\leq x$. Hence $\frac{2}{n+1}x<\frac{2}{n}x\leq y\leq x$. Then $(x,y)$ is located in the fundamental domain of $\cK_{n+1}$.

\begin{corollary}\label{coro}
Let $(x,y)\in \mathbf{F}$ with $x<n$. Suppose that $(x,y)$ is elementary for $\cK_{n}$. Then $(x,y)$ is elementary for $\cK_{n+1}$.

\end{corollary}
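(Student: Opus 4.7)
The plan is to transport the elementary $\cK_n$-module to a $\cK_{n+1}$-module via the embedding $\iota$ introduced just before the corollary, and then apply Lemma \ref{mlessn} now in the $\cK_{n+1}$-setting to conclude that the transported module is still elementary. The characterization in Lemma \ref{mlessn} is purely in terms of submodules generated by single elements of $M_1$, which is exactly the kind of property that is preserved under adjoining a zero-action arrow.

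First I would fix an elementary module $M\in\modd\cK_n$ with $\dimu M=(x,y)$. Since $(x,y)\in\mathbf{F}$ and $x<n$, i.e.\ $x\leq n-1$, Lemma \ref{mlessn} provides the key property that every non-zero $m\in M_1$ generates a submodule of $M$ of dimension vector $(1,y)$. Next I would consider $\iota(M)\in\modd\cK_{n+1}$; by construction, $\iota(M)$ has the same underlying vector spaces and the same action of $\gamma_1,\dots,\gamma_n$ as $M$, while $\gamma_{n+1}.\iota(M)=0$.

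Before invoking Lemma \ref{mlessn} for $\cK_{n+1}$, I would verify its two hypotheses: that $(x,y)$ lies in the fundamental domain of $\cK_{n+1}$, and that $x\leq(n+1)-1$. The first follows from
\[
\tfrac{2}{n+1}x < \tfrac{2}{n}x \leq y \leq x,
\]
which is exactly the point the paragraph preceding the corollary already records, and the second is immediate from $x<n$. For the main condition, given any non-zero $m\in \iota(M)_1=M_1$, the $\cK_{n+1}$-submodule generated by $m$ coincides with the $\cK_n$-submodule of $M$ generated by $m$, since the extra arrow $\gamma_{n+1}$ acts as zero and therefore contributes nothing to the generation process. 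Thus this generated submodule has dimension vector $(1,y)$, and Lemma \ref{mlessn} for $\cK_{n+1}$ applies to yield that $\iota(M)$ is elementary.

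There is no real obstacle here; the argument is essentially a compatibility check. The only point warranting care is the verification of the hypotheses of Lemma \ref{mlessn} over $\cK_{n+1}$, and it is exactly the strict inequality $x<n$ (rather than $x\leq n$) in the statement that makes the condition $x\leq (n+1)-1$ automatic.
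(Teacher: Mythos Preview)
Your proposal is correct and follows essentially the same approach as the paper: both transport an elementary $\cK_n$-module $M$ along $\iota$, use the necessity direction of Lemma \ref{mlessn} over $\cK_n$ to obtain the generation property, observe that this property persists in $\cK_{n+1}$ because $\gamma_{n+1}$ acts as zero, and then apply the sufficiency direction of Lemma \ref{mlessn} over $\cK_{n+1}$. If anything, your write-up is slightly tidier, since you explicitly verify the hypotheses $(x,y)\in\mathbf{F}_{n+1}$ and $x\leq (n+1)-1$ before invoking the lemma, whereas the paper interposes an unnecessary discussion of an arbitrary submodule $U$ before ultimately appealing to Lemma \ref{mlessn} anyway.
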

\begin{proof}
  Let $M\in\modd \cK_n$ be an elementary module
 with dimension vector $(x,y)$. Then $M$ is a module of $\modd\cK_{n+1}$ by the embedding $\iota$. We want to prove that $M$ is also elementary for $\cK_{n+1}$. Let $U$ be a submodule of $M$ in  $\modd \cK_{n+1}$. According to Lemma \ref{mlessn},   any $0\neq m_1\in M_1$  generates a submodule $U'$ in $\modd \cK_n$  with dimension vector $(1,y)$, and  this is also true for $\cK_{n+1}$. Then  $\dimu U=(x_U,y)$  for $\cK_{n+1}$, where $x_U\geq 1$. Then $M/U$ is preinjective with dimension vector $(x-x_U,0)$.  Hence we can see that $M$ is also elementary for $\cK_{n+1}$ by Lemma \ref{mlessn}.

\end{proof}

Let $(x,y)\in \mathbf{F}$ with $x<n$. Let $M\in \modd \cK_n$ be an elementary module  with  $\dimu M=(x,y)$.  Let $A$ be a matrix over $k$. We use $A^t$ to denote its transpose. Let $\{m_1,\cdots,m_x\}$ be a basis of $M_1$. We define $A_i:=\begin{bmatrix}
\gamma_1.m_i & \cdots & \gamma_n.m_i
\end{bmatrix}^t, \gamma_i\in \Lambda_n$. Lemma \ref{mlessn} tells us that  $\rk A_i=y$. 
We define $V_M:=k<A_{i}\mid  1\leq i\leq x>,$ that is, the linear space $V_M$ is spanned by all such $A_{i}$ over $k$. Then we have:

\begin{Lemma}\label{dimen}
Let $(x,y)\in \mathbf{F}$ with $x<n$.  Suppose that $M\in\modd \cK_n$ is an elementary module with  $\dimu M=(x,y)$. Then $\dim_kV_M=x$.

\end{Lemma}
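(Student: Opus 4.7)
The plan is a short contradiction argument that converts a linear dependence among the matrices $A_i$ into an element of $M_1$ annihilated by every arrow, which is forbidden by Lemma \ref{mlessn}.

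First, I would observe that $V_M$ is spanned by $x$ vectors $A_1,\dots,A_x$, so trivially $\dim_k V_M\leq x$. To prove equality it suffices to show the $A_i$ are linearly independent. Suppose on the contrary that there exist $b_1,\dots,b_x\in k$, not all zero, with $\sum_{i=1}^x b_i A_i=0$. Reading this row-by-row, one gets $\sum_i b_i(\gamma_j.m_i)=0$ for every $j\in\{1,\dots,n\}$, i.e.\ $\gamma_j.m=0$ for all $j$, where $m:=\sum_i b_i m_i$.

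The next step is to note that $m\neq 0$, because $\{m_1,\dots,m_x\}$ is a basis of $M_1$ and not all $b_i$ vanish. Thus $m$ is a non-zero element of $M_1$ which is killed by every arrow $\gamma_1,\dots,\gamma_n$. The submodule $U\subseteq M$ generated by $m$ therefore has dimension vector $(1,0)$, i.e.\ $U\cong S(1)$. However, since $(x,y)\in\mathbf{F}$ with $x<n$ and $x\geq 1$, the inequality $y\geq \tfrac{2}{n}x$ together with $y\in\mathbb{N}$ forces $y\geq 1$, so Lemma \ref{mlessn} (applied to the elementary module $M$) asserts that the submodule generated by any non-zero element of $M_1$ has dimension vector $(1,y)\neq(1,0)$. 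This contradiction rules out the existence of the dependence, giving $\dim_k V_M=x$.

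There is no real obstacle here beyond correctly translating the matrix identity $\sum b_i A_i=0$ into the module-theoretic statement $\gamma_j.(\sum b_i m_i)=0$ for every $j$; once that is made, Lemma \ref{mlessn} finishes the argument immediately. The only minor care needed is to confirm that the degenerate case $x=0$ is implicit in the hypothesis and to check that $y\geq 1$ whenever $x\geq 1$ inside $\mathbf{F}$, both of which are straightforward.
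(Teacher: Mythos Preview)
Your argument is correct and is essentially identical to the paper's own proof: both suppose a nontrivial relation $\sum_i b_i A_i=0$, rewrite it as $\gamma_j.\bigl(\sum_i b_i m_i\bigr)=0$ for all $j$, and then invoke Lemma~\ref{mlessn} to derive a contradiction from the resulting nonzero element $m\in M_1$ killed by every arrow. Your write-up is in fact slightly more careful than the paper's (you correctly say ``not all $b_i$ zero'' and verify $y\geq 1$), but the method is the same.
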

\begin{proof}
Suppose that dim$_kV_M<x$. Then there exist some $b_i\in k\setminus\{0\}$ such that $\sum^x_{i=1}b_iA_{i}=0$. This means
\begin{center}
$ \sum^x_{i=1}b_iA_{i}=\begin{bmatrix}
\gamma_1.(\sum^x_{i=1}b_im_i) \\
\vdots \\
\gamma_x.(\sum^x_{i=1}b_im_i) \\
\vdots\\
 \gamma_n.(\sum^x_{i=1}b_im_i)
\end{bmatrix}=\begin{bmatrix}
0\\
\vdots\\
0\\
\vdots\\
0
\end{bmatrix}.  $
\end{center}
Hence we can find an element $0\neq m'=\sum^x_{i=1}b_im_i\in M_1$ such that $\gamma_j.m'=0$ for all $j\in \Lambda_{n}$. According to Lemma \ref{mlessn}, this cannot
 happen.

\end{proof}

\begin{Lemma}\label{limit}
Suppose that $(x,y)\in \mathbf{F}$ is an elementary dimension vector with $x<n$. Then $x+y\leq n+1$.
\end{Lemma}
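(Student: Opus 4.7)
The plan is to reinterpret the conclusion of Lemma~\ref{mlessn} as a maximal-rank condition on the subspace $V_M \subseteq L(V^y, V^n)$ constructed in Lemma~\ref{dimen}, and then invoke the upper bound in Theorem~\ref{wt} (West's theorem).

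First, dispose of the case $y = 1$: the condition $(x,y) \in \mathbf{F}$ then forces $\tfrac{2}{n}x \leq 1$, so $x \leq n/2$ and hence $x + y \leq n/2 + 1 \leq n + 1$ for $n \geq 3$. So assume $y \geq 2$. Fix bases $\{m_1,\ldots,m_x\}$ of $M_1$ and of $M_2$, and form the $n \times y$ matrices $A_i$ of Lemma~\ref{dimen}, so $V_M = k\langle A_1,\ldots,A_x\rangle \subseteq L(V^y, V^n)$ with $\dim_k V_M = x$. For every nonzero vector $(b_1,\ldots,b_x) \in k^x$, the element $m := \sum_i b_i m_i \in M_1$ is nonzero, so by Lemma~\ref{mlessn} the submodule $\cK_n \cdot m$ has dimension vector $(1,y)$; equivalently, the rows $\gamma_1.m, \ldots, \gamma_n.m$ of the matrix $A := \sum_i b_i A_i$ span all of $M_2$, i.e.\ $\rk A = y$. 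Thus $V_M$ is a linear subspace of $L(V^y, V^n)$ in which every nonzero element has rank exactly $y$.

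Apply Theorem~\ref{wt} with $r = j = y$ and $i = n$; the hypothesis $2 \leq r \leq j \leq i$ is satisfied because $2 \leq y \leq x < n$. The theorem gives
\[
x \;=\; \dim_k V_M \;\leq\; l(y, y, n) \;\leq\; n + y - 2y + 1 \;=\; n - y + 1,
\]
which rearranges to $x + y \leq n + 1$. The only substantive step is the rank interpretation: once one recognises that the submodule-generation condition of Lemma~\ref{mlessn} is precisely the rank-$y$ condition on nonzero elements of $V_M$, the desired bound falls out of West's upper bound. I do not expect any serious technical obstacle beyond bookkeeping with the two edge cases $y = 1$ and $y = 0$ (the latter excluded by regularity of elementary dimension vectors).
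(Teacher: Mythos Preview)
Your proof is correct and follows essentially the same route as the paper: realize $V_M$ as an $x$-dimensional subspace of $L(V^y,V^n)$ via Lemma~\ref{dimen}, observe that Lemma~\ref{mlessn} forces every nonzero element of $V_M$ to have rank exactly $y$, and then apply the upper bound in Theorem~\ref{wt}. In fact you are slightly more careful than the paper on two points: you explicitly verify the rank condition for \emph{all} nonzero elements of $V_M$ (not just the basis matrices $A_i$), and you separately treat the case $y=1$, which is needed because Theorem~\ref{wt} assumes $r\geq 2$.
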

\begin{proof}
Suppose that $M\in \modd \cK_n$ is an elementary module with  $\dimu M=(x,y)$. We now consider the linear space $V_M$.  Note that $A_{i}$ can be seen as a linear transformation from the vector space $V^y$ to the vector space $V^n$ and  $\rk A_i\equiv y$,   $A_i\in V_M$.  On the other hand,  $n-y+1\leq l(y,y,n)\leq n+y-2y+1=n-y+1$ by Theorem \ref{wt}.   Hence $\dim _k V_M=x\leq n-y+1,$ that is, $x+y\leq n+1$.

\end{proof}

\section{Construction of elementary modules}

Let  $(x,y)\in \mathbf{F}$ with $x+y=n+1$. We construct a  module $X=X(x,y)=(X_1,X_2,X(\gamma_i)_{1\leq i\leq n})$ of $\cK_n$: let $\{e_1,\cdots,e_x\}$ be a standard basis of $X_1$, i.e. the $i$-th coordinate is $1$ and all others are  $0$ in  each $e_i$. Let $\{e'_1,\cdots,e'_y\}$ be a standard basis of $X_2$.  We use $[s;a_1,a_2,\cdots,a_y]$ to denote  the arrow $\gamma_{a_i}$ mapping $e_s$ to $e'_{i}$ in $X(x,y)$, and call it \textit{arrow basis} of $e_s$, where $1\leq s\leq x, 1\leq  i \leq y, 1\leq a_i \leq n,  \gamma_{a_i}\in \Lambda_n$. For $e_1$, we define its arrow basis being $[1;1,2,\cdots,y]$. For the second $e_2$, we start from $2$ to  $y+1$, that is, the arrow basis of $e_2$ is $[2;2,3,\cdots,y+1]$. For $e_3$,  we repeat this process by starting from $3$ to   $y+2$.  We keep doing it. Finally, we can get an arrow basis of $X(x,y):[1;1,2,\cdots,y], [2;2,3, \cdots,y+1], \cdots, [s;s ,s+1,\cdots,y+s-1],\cdots,  [x;x,x+1,\cdots,n],1\leq s\leq x.$
That is, 

\begin{center}

\begin{tikzpicture}
\node (00) at (0,0) {$e_1$};
\node (25) at (1.5,0) {$\cdots$};
\node (30) at (3,0) {$e_s$};
\node (60) at (6,0) {$e_x$};

\node (35) at (4.5,0) {$\cdots$};
\node (22)  at (-2,-1){$X(x,y)=$};

\node (0-3) at (0,-3) {$e'_1$};
\node (25) at (1.5,-3) {$\cdots$};
\node (3-3) at (3,-3) {$e'_t$};
\node (35) at (4.5,-3) {$\cdots$};
\node (6-3) at (6,-3) {$e'_{y},$};

\path [->] (00) edge node[pos=.25,left]{$\gamma_1$} (0-3)
           (00) edge node[pos=.25, left]{$\gamma_t$} (3-3)
           (00) edge node[pos=.15, right]{$\gamma_{x}$}(6-3)
           ;
\path[dashed, ->]          (30) edge node[pos=.55, left]{$\gamma_{s}$} (0-3)      
         (30) edge node[pos=.65,right]{$\gamma_{s+t-1}$} (3-3)                    
           (30) edge node[pos=.65,right]{$\gamma_{s+y-1}$} (6-3)
                   ;
             
\draw[snake,segment length=10pt,->] (60)--(0-3)  node[pos=.15,left]{$\gamma_x$}; 
  \draw[snake,segment length=10pt,->] (60)--(3-3)  node[pos=.45,right]{$\gamma_{x+t-1}$};             
   \draw[snake,segment length=10pt,->] (60)--(6-3)  node[pos=.5,right]{$\gamma_{n}$};                        
\end{tikzpicture}

\end{center}
Let $e'_j=0$ when $j\nin \{1\,\cdots,y\}$. Then  $\gamma_i.e_j=e'_{i-j+1},i\in \Lambda_n,j\in \{1,\cdots,x\}$. Now we give an example. 

\begin{example}
Let $X(3,3)\in\modd \cK_5$. Then we can get its arrow basis: $[1;1,2,3],[2;2,3,4],[3;3,4,5]$. The structure of  $X(3,3)$ would be the following
\begin{center}

\begin{tikzpicture}
\node (00) at (0,0) {$e_1$};

\node (30) at (3,0) {$e_2$};
\node (60) at (6,0) {$e_3$};

\node (0-3) at (0,-3) {$e'_1$};

\node (3-3) at (3,-3) {$e'_2$};

\node (6-3) at (6,-3) {$e'_{3}.$};

\path [->] (00) edge node[pos=.25,left]{$\gamma_1$} (0-3)
           (00) edge node[pos=.25, left]{$\gamma_2$} (3-3)
           (00) edge node[pos=.15, right]{$\gamma_{3}$}(6-3)
           ;
\path[dashed, ->]          (30) edge node[pos=.55, left]{$\gamma_{2}$} (0-3)      
         (30) edge node[pos=.85,left]{$\gamma_{3}$} (3-3)                    
           (30) edge node[pos=.75,right]{$\gamma_{4}$} (6-3)
                   ;
             
\draw[snake,segment length=10pt,->] (60)--(0-3)  node[pos=.15,left]{$\gamma_3$}; 
  \draw[snake,segment length=10pt,->] (60)--(3-3)  node[pos=.45,right]{$\gamma_{4}$};             
   \draw[snake,segment length=10pt,->] (60)--(6-3)  node[pos=.5,right]{$\gamma_{5}$};                        
\end{tikzpicture}

\end{center}
Then $X(3,3)=\begin{tikzcd}
   k^3 
    \arrow[r, draw=none, "{\vdots}" description]
    \arrow[r, bend left,        "X(\gamma_1)"]
    \arrow[r, bend right, swap, "X(\gamma_5)"]
    &
     k^3,
\end{tikzcd}$
where $X(\gamma_1)=\begin{bmatrix}
1 & 0 & 0\\
0 & 0 & 0\\
0 & 0 & 0 
\end{bmatrix}, X(\gamma_2)=\begin{bmatrix}
0 & 1& 0 \\
1 & 0 & 0\\
0 & 0 & 0
\end{bmatrix}, X(\gamma_3)= \begin{bmatrix}
0 & 0& 1\\
0 & 1& 0\\
1& 0 & 0
\end{bmatrix}, $

$X(\gamma_4)= \begin{bmatrix}
0 & 0& 0\\
0 & 0& 1\\
0& 1 & 0
\end{bmatrix}, X(\gamma_5)= \begin{bmatrix}
0 & 0& 0\\
0 & 0& 0\\
0& 0 & 1
\end{bmatrix}.$

\end{example}

\begin{Lemma}\label{X}
The module $X(x,y)$ is elementary.

\end{Lemma}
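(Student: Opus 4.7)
My plan is to invoke Lemma \ref{mlessn}. First I check its hypothesis $x\leq n-1$: combining $x+y=n+1$ with $y\geq \tfrac{2}{n}x=\tfrac{2}{n}(n+1-y)$ gives $(n+2)y\geq 2n+2$, whence $y\geq 2$ (for $n\geq 3$) and therefore $x=n+1-y\leq n-1$. Consequently it suffices to prove that every non-zero $m\in X(x,y)_1$ generates a submodule whose dimension vector is $(1,y)$, i.e.\ that $\gamma_1.m,\dots,\gamma_n.m$ span all of $X(x,y)_2\cong k^y$.

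I will work directly from the construction. Writing $m=\sum_{j=1}^{x}c_j e_j$ and using $\gamma_i.e_j=e'_{i-j+1}$ (with the convention $e'_k=0$ for $k\notin\{1,\dots,y\}$), the coefficient of $e'_k$ in $\gamma_i.m$ equals $c_{i-k+1}$, where we also set $c_j:=0$ for $j\notin\{1,\dots,x\}$. Arranging these in the $n\times y$ matrix $A$ with $A_{i,k}=c_{i-k+1}$, the span of $\{\gamma_i.m\}_{1\leq i\leq n}$ is the column space of $A$, so the submodule generated by $m$ has dimension vector $(1,y)$ if and only if the $y$ columns of $A$ are linearly independent.

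To show linear independence, suppose $\sum_{k=1}^{y}\alpha_k(\text{column }k)=0$, which means $\sum_{k=1}^{y}\alpha_k c_{i-k+1}=0$ for every $i=1,\dots,n$. I will recognise this as the vanishing of the coefficients of $t^0,t^1,\dots,t^{n-1}$ in the product $q(t)P(t)$, where $q(t)=\sum_{k=1}^{y}\alpha_k t^{k-1}$ and $P(t)=\sum_{j=1}^{x}c_j t^{j-1}$. Since $\deg(qP)\leq (y-1)+(x-1)=n-1$, the vanishing of all these $n$ coefficients forces $qP=0$ in $k[t]$. As $P\neq 0$ (because $m\neq 0$) and $k[t]$ is an integral domain, we conclude $q=0$, so all $\alpha_k$ vanish. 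The degree bound $n-1$ matching exactly the row count $n$ is precisely where the hypothesis $x+y=n+1$ is used decisively, and this polynomial-convolution reformulation is the only step I expect to be more than bookkeeping; the rest amounts to unravelling the definition of the arrow basis. An application of Lemma \ref{mlessn} then yields that $X(x,y)$ is elementary.
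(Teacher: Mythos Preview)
Your proof is correct and follows the same overall strategy as the paper: verify the hypothesis of Lemma~\ref{mlessn} by showing that the $n\times y$ matrix $A_m$ (whose rows are $\gamma_1.m,\dots,\gamma_n.m$) has rank exactly $y$ for every non-zero $m\in X(x,y)_1$. You also add the explicit check that $x\leq n-1$, which the paper leaves implicit.

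The difference lies in how the rank is computed. The paper argues by exhibiting an explicit $y\times y$ lower-triangular submatrix: if $j=\min\{i\mid a_i\neq 0\}$, then rows $j,\dots,j+y-1$ of $A_m$ form a lower-triangular matrix with $a_j\neq 0$ on the diagonal, hence invertible. Your argument instead recognises $A_m$ as a Toeplitz (convolution) matrix and reformulates column independence as the statement that $q(t)P(t)=0$ in $k[t]$ forces $q=0$ when $P\neq 0$, using that $\deg(qP)\leq n-1$ exactly because $x+y=n+1$. Your route is a bit more conceptual and makes the role of the hypothesis $x+y=n+1$ transparent; the paper's route is more hands-on and avoids introducing polynomials. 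Both are short and valid.
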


\begin{proof}
 Let $0\neq m=(a_1,\cdots,a_x)\in X_1$. Then we can get a matrix 
\begin{center}
$A_{m}=\begin{bmatrix}
\gamma_1.m \\
\vdots \\
\gamma_x.m \\
\vdots\\
 \gamma_n.m
\end{bmatrix}= \begin{bmatrix}
a_1 & 0 & 0 & \cdots & 0 \\
a_2 & a_1 &  0& \cdots &  0 \\
a_3 & a_2 & a_1 & \cdots  & 0 \\
 \vdots & \vdots & \ddots &\ddots & \vdots \\
 a_{y} & a_{y-1} & \cdots & a_2 & a_{1}\\
 a_{y+1} & a_y & \cdots & a_3 & a_2\\
\vdots  & \vdots &  & \vdots  & \vdots \\
a_x &  a_{x-1} & \cdots & a_{x-y+2}  & a_{x-y+1}\\
0 & a_{x} & \cdots  & a_{x-y+3} & a_{x-y+2}\\
\vdots  & \vdots & \ddots  & \ddots & \vdots \\
0 & 0  & \cdots  & a_{x} & a_{x-1}\\ 
 0 & 0& \cdots & 0  & a_x
\end{bmatrix}.$

\end{center}
Since $A_m$ is an $n\times y$ matrix,  we have  $\rk A_m\leq y$.  Let $j= \min \{i\mid a_i\neq 0, 1\leq i\leq x\}$. We get a  $y\times y$  matrix  $ A_j=\begin{bmatrix}
a_j & 0 & \cdots & 0 \\
a_{j+1} & a_j & \cdots   & 0\\
\vdots & \vdots & \ddots & \vdots \\
a_{j+y-1} & a_{j+y-2} & \cdots & a_j
\end{bmatrix}$.  Then  $\rk A_j=y$. Moreover,  we have a partition
\begin{center}
 $A_m=\begin{bmatrix}
A_j \\
A_0
\end{bmatrix}$,
\end{center}
where $A_0=\begin{bmatrix}
a_{j+y} & a_{j+y-1} & \cdots & a_{j+1}\\
a_{j+y+1} & a_{j+1} & \cdots & a_{j+2}\\
\vdots & \vdots & \ddots & \vdots \\
0 & 0& \cdots & a_x
\end{bmatrix}$. Then  $\rk A_m\geq y$ by \cite[8.2]{George}. Thus,  we get 
\begin{center}
rk $A_m\equiv y$. 
\end{center}
By Lemma \ref{mlessn},  $X(x,y)$ is elementary. 

\end{proof}

\begin{Theorem}
Let   $(x,y)\in \mathbf{F}$ with $x+y=n+1$. Then a module $M\in\modd \cK_n$ with dimension vector $(x,y)$ is elementary if and only if $M$ is of the form $X(x,y)$.

\end{Theorem}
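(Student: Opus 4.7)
The ``if'' direction is Lemma \ref{X}. For the ``only if'' direction, let $M$ be elementary with $\dimu M = (x,y)$ and $x+y = n+1$. Since $(x,y) \in \mathbf{F}$ gives $\frac{2}{n} x \le y = n+1-x$, we obtain $(n+2)x \le n^2+n$, hence $x < n$; therefore Lemma \ref{mlessn} applies and for every $0 \ne m \in M_1$ we have $\Lambda_n . m = M_2$. The plan is to produce bases of $M_1$, $M_2$, $\Lambda_n$ under which the action of $\cK_n$ on $M$ coincides literally with the action on $X(x,y)$, i.e., $\gamma_i . m_j = m'_{i-j+1}$ with the convention $m'_k = 0$ for $k \notin \{1,\ldots,y\}$.

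First I would fix a base point: pick any $m_1 \ne 0$ in $M_1$; since $\Lambda_n . m_1 = M_2$, the kernel of the map $T_{m_1} : \Lambda_n \to M_2$, $\gamma \mapsto \gamma.m_1$, is $(n-y) = (x-1)$-dimensional. Choose a basis $\gamma_1, \ldots, \gamma_y$ of a direct complement of $\ker T_{m_1}$ in $\Lambda_n$, set $m'_i := \gamma_i . m_1$ (which gives a basis of $M_2$), and complete to a basis of $\Lambda_n$ by any basis $\gamma_{y+1}, \ldots, \gamma_n$ of $\ker T_{m_1}$. In these coordinates the matrix $A_1$ associated to $m_1$ (in the notation of Lemma \ref{dimen}) has the block form $\binom{I_y}{0}$. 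Next I would construct $m_2, \ldots, m_x$ inductively, adjusting the residual basis freedom at each stage. The key lever is the constant-rank-$y$ condition on the full subspace $V_M$ inherited from Lemma \ref{mlessn}: for every nonzero $(c_1, \ldots, c_x) \in k^x$ the matrix $\sum c_i A_i$ has full column rank $y$. Writing $A_j = \binom{B_j}{C_j}$ with $B_j \in M_{y \times y}(k)$, this pencil-type condition forces each $B_j$ to be strictly below-diagonal (a shift matrix, up to basis change within the already-fixed normalization), and then $C_j$ must complete the shifted Toeplitz pattern required by $X(x,y)$.

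The main obstacle lies in ensuring that at each inductive stage one still has sufficient residual freedom -- in modifying $\gamma_i$ for $i \le y$ by elements of $\ker T_{m_1}$, and in rotating the kernel basis $\gamma_{y+1}, \ldots, \gamma_n$ -- to normalize $A_j$ without disturbing the already-fixed $A_1, \ldots, A_{j-1}$. A more conceptual packaging, which avoids the combinatorial bookkeeping, is to observe that the structure matrix of $M$ (the $x \times y$ matrix of linear forms on $\Lambda_n$ whose $(i,j)$-entry sends $\gamma \in \Lambda_n$ to the coefficient of $m'_j$ in $\gamma . m_i$) is $1$-generic, and that $\dim_k \Lambda_n = x+y-1$ is the minimal dimension supporting a $1$-generic $x \times y$ matrix of linear forms; the $1$-genericity follows because any failure would produce $0 \ne m \in M_1$ together with $0 \ne f \in M_2^*$ vanishing on $\Lambda_n . m$, contradicting Lemma \ref{mlessn}. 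The classification of such minimal $1$-generic matrices (classically via Kronecker's canonical form for matrix pencils when $y = 2$, and its higher-rank analogue) identifies the structure matrix of $M$ up to $GL_x \times GL_y \times GL_n$-equivalence with the catalecticant (Hankel) matrix, which is precisely the structure matrix of $X(x,y)$. Either approach yields $M \cong X(x,y)$.
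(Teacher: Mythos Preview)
Your proposal is correct and, in its second packaging via $1$-generic matrices, makes explicit precisely the step that the paper's own argument leaves opaque. The paper proceeds as follows: from Lemma~\ref{dimen} and Theorem~\ref{wt} it deduces $\dim_k V_M = x = l(y,y,n)$, so both $V_M$ and $V_X$ are constant-rank-$y$ subspaces of $L(V^y,V^n)$ of the maximal possible dimension; it then simply asserts ``$V_M\cong V_X$'', writes down a change-of-basis matrix $T$ between bases of the two spaces, and builds an intermediate module $X'$ claimed to be isomorphic to $M$. All of the content of the ``only if'' direction is hidden in that unjustified clause: two $x$-dimensional spaces are of course abstractly isomorphic, but what is actually needed is equivalence under $GL_y\times GL_n$ (plus $GL_x$ on the $M_1$-side), and the paper supplies no reason why maximality alone forces this.

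Your route identifies this as the real issue and resolves it by recasting the data of $M$ as an $x\times y$ matrix of linear forms on $\Lambda_n$, observing that Lemma~\ref{mlessn} makes it $1$-generic, noting that $n=x+y-1$ is the minimal number of variables supporting a $1$-generic $x\times y$ matrix, and invoking the classical classification (Kronecker pencils for $y=2$, the catalecticant/Hankel normal form in general) that pins it down up to $GL_x\times GL_y\times GL_n$. This is exactly the fact lurking behind the paper's ``$V_M\cong V_X$'', now made precise and sourced; your first, inductive-normalization sketch is what a self-contained proof of that classification would look like. One remark: the $GL_n$-factor in your equivalence is an automorphism of $\cK_n$, not a module map, so your conclusion is isomorphism up to such an automorphism; this is consistent with the paper's phrasing ``of the form $X(x,y)$'' and with its own implicit use of the matrix $T$, but it is worth flagging that this is weaker than literal module isomorphism.
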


\begin{proof}
By Lemma \ref{X}, we already know that $X(x,y)$ is elementary. Now  let $M\in \modd \cK_n$ be an elementary module with dimension vector  $(x,y)$. By Lemma \ref{dimen}, we get $\dim_k V_M=x$.  According to Theorem \ref{wt}, we have
\begin{center}
$n-y+1\leq l(y,y,n)\leq n+y-2y+1=n+1-y$.
\end{center}
That is, $x=n+1-y\leq l(y,y,n)=n+1-y$, which means $x=l(y,y,n)$. By Lemma \ref{X}, the  module $X(x,y)$ can also provide a linear space $V_X$ satisfying $\dim_k V_X=x$ and  $\rk v=y$ for all $0\neq v\in V_X$.  Then we have   $V_M\cong V_X$.  Suppose that $\{A_1,\cdots,A_x\}$ and $\{B_1,\cdots,B_x\}$ are two bases of the linear spaces $V_X$ and $V_M$, respectively. Then there exists an invertible matrix $T$ such that
\begin{equation}\label{INM}
(B_1,\cdots, B_x)^t=(A_{1},\cdots,A_{x})T, 
\end{equation}
 We now reconstruct a new module $X'=X'(x,y)$ such that the basis $\{A'_1,\cdots,A'_x\}$ of the linear space $V_{X'}$ satisfies $(A'_1,\cdots,A'_x)^t=(A_{1},\cdots, A_{x})T$. According to Lemma \ref{mlessn},  $X'(x,y)$ is elementary. Then we can see that $X'(x,y)\cong M$.

\end{proof}

\begin{Remark}
Unfortunately, we couldn't give a similar result for a dimension vector $(x,y)\in \mathbf{F}$ when $n<x<2n$. It is still a question to construct an    elementary module with such a dimension vector.

\end{Remark}

%%%%%%%%%%%%%%%%%%%%%%% REFERENCES %%%%%%%%%%%%%%%%%%%

\begin{bibdiv}
\begin{biblist}
\addcontentsline{toc}{chapter}{\textbf{Bibliography}}

\bib{Assem1}{book}{
title={Elements of the Representation Theory of Associative Algebras, I},
subtitle={Techniques of Representation Theory},
series={London Mathematical Society Student Texts},
author={I. Assem},
author={D. Simson},
author={A. Skowro\'nski},
publisher={Cambridge University Press},
date={2006},
address={Cambridge},
}

\bib{Gabriel}{book}{
subtitle={Auslander-Reiten sequences and representation-finite algebras},
author={Gabriel, Peter},
title={Representation theory I},
 series={Lecture Notes in Mathematics},
  volume={831},
 pages={1--71},
year={1980},
publisher={Springer},
}

\bib{Otto}{article}{
title={Elementary modules},
author={Otto Kerner},
author={Frank, Lukas},
journal={Mathematische Zeitschrift},
volume={223},
date={1996},
pages={421--434},
}

\bib{Claus2}{webpage}{
title={The elementary $3-$Kronecker representations},
author={Michael  Ringel,Claus},
date={2016},
url={https://www.math.uni-bielefeld.de/~ringel/opus/elementary.pdf},

}
\bib{Claus3}{article}{
  title={Representations of K-species and bimodules},
  author={Ringel, Claus Michael},
  journal={Journal of algebra},
  volume={41},
  number={2},
  year={1976},
}

\bib{Daniel}{webpage}{
title = {Representations of Regular Trees and Invariants of AR-Components for Generalized Kronecker Quivers},
author = {Bissinger, Daniel},
date = {2018},
url = {https://macau.uni-kiel.de/servlets/MCRFileNodeServlet/dissertation_derivate_00007342/DissertationDanielB.pdf}
}

\bib{West}{article}{
  title={Spaces of matrices of fixed rank},
  author={Westwick,R},
  journal={Linear and Multilinear Algebra},
  volume={20},
  number={2},
  pages={171--174},
  year={1987},
  publisher={Taylor \& Francis},
}

\bib{George}{article}{
author = { George   Matsaglia  and  George   P. H. Styan },
title = {Equalities and Inequalities for Ranks of Matrices},
journal = {Linear and Multilinear Algebra},
volume = {2},
number = {3},
pages = {269-292},
year  = {1974},
publisher = {Taylor & Francis},
}

\end{biblist}
\end{bibdiv}

\end{document}